\newtheorem{theorem}{Theorem}[section]
\newtheorem{prop}{Proposition}[section]
\newtheorem{cor}{Corollary}[section]
\newtheorem*{theo*}{Theorem}
\newtheorem{lm}{Lemma}[section]
\theoremstyle{definition}
\newtheorem{defin}{Definition}[section]
\newtheorem*{rem*}{Remark}
\newtheorem{rem}{Remark}[section]
\begin{document}
	\title[Quasi-Geodesics and Backward Orbits]{Quasi-Geodesics and Backward Orbits Under Semigroups of Holomorphic Functions}
	\author{Konstantinos Zarvalis}
	
	\address{Department of Mathematics, Aristotle University of Thessaloniki, 54124, Thessaloniki, Greece}
	\email{zarkonath@math.auth.gr}
	\keywords{Backward orbit, semigroup of holomorphic functions, quasi-geodesic, non-tangential convergence}
	\subjclass{Primary 37F44; Secondary 30D05.}
	\maketitle
	\begin{abstract}
		We explore two properties of backward orbits under semigroups of holomorphic self-maps in the unit disk. First, we prove that regular backward orbits are quasi-geodesics for the hyperbolic distance of the unit disk. Then, we show that backward orbits satisfy a useful property, this time in Euclidean terms. 
	\end{abstract}
	\section{Introduction}
	\textit{One-parameter continuous semigroups of holomorphic functions in the unit disk} $\mathbb{D}$, or, from now on, \textit{semigroups in} $\mathbb{D}$, have been studied extensively in recent years. They were first introduced by Berkson and Porta in \cite{berkson} and their theory is presented in several books, such as \cite{abate}, \cite{elin}, and more recently in the monograph \cite{book}.
	
	A semigroup in $\mathbb{D}$ is a family $(\phi_t)$, $t\ge0$, of holomorphic self-maps of the unit disk that satisfies the following three properties:
	
	\begin{enumerate}[(i)]
		\item $\phi_0$ is the identity in $\mathbb{D}$,
		\item $\phi_{t+s}(z)=\phi_t(\phi_s(z))$, for all $z\in\mathbb{D}$ and all $t,s\ge0$,
		\item $\lim\limits_{t\to s}\phi_t(z)=\phi_s(z)$, for all $z\in\mathbb{D}$ and all $s\ge0$.
	\end{enumerate}
	
	All the elements of a semigroup have the same dynamic behavior (\cite[Remark 12.1.2]{book}). In particular, if, for some $t_0>0$, the function $\phi_{t_0}$ has no fixed points inside the unit disk, then the same is true for any other element of the semigroup, other than $\phi_0$. Such semigroups, without any fixed points, are called \textit{non-elliptic}. For a non-elliptic semigroup $(\phi_t)$, there always exists a unique point $\tau\in\partial\mathbb{D}$ such that
	$$\lim\limits_{t\to+\infty}\phi_t(z)=\tau,$$
	for all $z\in\mathbb{D}$ (\cite[Theorem 8.3.1]{book}). This point $\tau$ is called the \textit{Denjoy-Wolff} point of the semigroup.
	\par A very important tool, connected with the study of a semigroup $(\phi_t)$, is the so-called \textit{Koenigs function} $h$ of the semigroup. More specifically and for non-elliptic semigroups, this function is the unique conformal mapping $h:\mathbb{D}\to\mathbb{C}$ satisfying $h(0)=0$, such that
	$$\phi_t(z)=h^{-1}(h(z)+t), \quad z\in\mathbb{D}, \quad t\ge0.$$
	The simply connected domain $\Omega:=h(\mathbb{D})$ is called the \textit{associated planar domain} of the semigroup.
	\par A direct implication of the above relation concerning the Koenigs function is the fact that $\Omega$ is \textit{convex in the positive direction} (also known as \textit{starlike at infinity}). This means that $\{w+t:t\ge0\}\subset\Omega$, for every $w\in\Omega$.
	\par A first distinction within the class of non-elliptic semigroups was given in \cite{flows}. If $\Omega$ is contained in a horizontal strip, then the semigroup is called \textit{hyperbolic}. Otherwise, it is called \textit{parabolic}.
	\par Let $z\in\mathbb{D}$. Then the curve $\gamma_z:[0,+\infty)\to\mathbb{D}$, with $\gamma_z(t)=\phi_t(z), t\ge0,$ is called the \textit{trajectory} of $z$. Later on, we might refer to $\gamma_z$ as the \textit{forward trajectory} of $z$ to avoid confusion. It is clear, by the definition of the Denjoy-Wolff point $\tau$, that every trajectory converges to $\tau$.
	\par Given $z\in\mathbb{D}$, one can also study the \textit{backward trajectory} of $z$ until it reaches a point of the unit circle. In particular, if such a trajectory can be defined for all negative ``times'', then we have a \textit{backward orbit}. More formally, if $(\phi_t)$ is a semigroup in $\mathbb{D}$, then a continuous curve $\gamma:[0,+\infty)\to\mathbb{D}$ is called a backward orbit if for every $t\in(0,+\infty)$ and for every $s\in[0,t],$
	$$\phi_s(\gamma(t))=\gamma(t-s).$$
	Backward orbits for one-parameter semigroups in $\mathbb{D}$ were initially studied in \cite{flows} and then in \cite{elin2}. All the information concerning backward orbits needed in this article will be mostly drawn from \cite{back} (see also \cite[Chapter 13]{book}).
	\par Leaving aside the semigroups, for a moment, we now need to provide some information with regard to hyperbolic geometry in order to be able to state our first theorem. To begin with, 
	we denote by $\lambda_\mathbb{D}$ the \textit{hyperbolic density} of the unit disk, which is given by the relation
	$$\lambda_\mathbb{D}(z)|dz|=\frac{|dz|}{1-|z|^2}.$$
	 Then, the \textit{hyperbolic distance} between any two points $z, w$ of the unit disk is defined as
	$$k_\mathbb{D}(z,w)=\inf\limits_\gamma\int\limits_\gamma\lambda_\mathbb{D},$$
	where the infimum is taken over all smooth curves $\gamma$ that join $z$ and $w$ inside the unit disk. Now, let $\Omega\subsetneq\mathbb{C}$ be a simply connected domain and $f:\Omega\to\mathbb{D}$ a corresponding Riemann map. We denote by $\lambda_\Omega$ the hyperbolic density of $\Omega$ given by 
	$$\lambda_\Omega(z)=\lambda_\mathbb{D}(f(z))\cdot|f'(z)|, \quad z\in\Omega,$$
	and by $k_\Omega$ the hyperbolic distance in $\Omega$ given by
	$$k_\Omega(z,w)=k_\mathbb{D}(f(z),f(w)), \quad z,w\in\Omega.$$
	It can be verified that this definition is indeed independent of the choice of the Riemann map $f$.  In addition, given a simply connected domain $\Omega$ and a smooth curve $\gamma:(a,b)\to\Omega$ (where $-\infty\le a<b\le+\infty$), we denote the \textit{hyperbolic length} of $\gamma$ between the points $\gamma(t_1)$ and $\gamma(t_2)$, where $a<t_1\le t_2<b$, by $l_\Omega(\gamma;[t_1,t_2])$. The hyperbolic length is given by the relation
	$$l_\Omega(\gamma;[t_1,t_2])=\int\limits_\gamma\lambda_\Omega(z)|dz|=\int\limits_{t_1}^{t_2}\lambda_\Omega(\gamma(t))|\gamma'(t)|dt.$$
	The definition of the hyperbolic length can be extended in a natural way in case the domain of $\gamma$ is of the form $[a,b],[a,b)$ or $(a,b]$.
	\par As usual, a \textit{geodesic} for the hyperbolic distance is a smooth curve such that the hyperbolic length among any two points of the curve coincides with the hyperbolic distance between these two points. For the purposes of this article, we need the notion of a \textit{quasi-geodesic} as well. We mostly follow \cite[Chapter 6]{book} for the necessary information concerning quasi-geodesics. For a more detailed study on quasi-geodesics, the reader may refer to \cite{quasi}.
	\begin{defin}
		Let $\gamma:[0,+\infty)\to\mathbb{D}$ be a Lipschitz curve that satisfies $\lim\limits_{t\to+\infty}k_\mathbb{D}(\gamma(0),\gamma(t))=+\infty$ (i.e. the curve tends to the unit circle). Then, $\gamma$ is called an \textit{($A,B$)-quasi-geodesic} for the hyperbolic distance $k_\mathbb{D}$ of the unit disk provided there exist absolute positive constants $A\ge1$ and $B\ge0$ such that
		$$l_\mathbb{D}(\gamma;[t_1,t_2])\le A\cdot k_\mathbb{D}(\gamma(t_1),\gamma(t_2))+B,$$
		for all $t_2\ge t_1\ge0$.
	\end{defin}
	
	In case there is no possibility of confusion with the constants, we might refer to an $(A,B)$-quasi-geodesic as just a quasi-geodesic in order to simplify the notation. Considering, in addition, the trivial inequality
	$$k_\mathbb{D}(\gamma(t_1),\gamma(t_2))\le l_\mathbb{D}(\gamma;[t_1,t_2]), \quad t_2\ge t_1\ge0,$$
	it is clear that, for a quasi-geodesic, the hyperbolic length among any two points of the curve $\gamma$ is comparable to the hyperbolic distance between these two points. Therefore, a hyperbolic quasi-geodesic presents a behavior that is adequately close to that of the respective hyperbolic geodesic.
	\par Moreover, keeping in mind the conformal invariance of the hyperbolic length and the hyperbolic distance, the definition of a quasi-geodesic extends naturally to any other simply connected domain.
	\par In general, a quasi-geodesic does not need to be a Lipschitz curve, but merely a continuous curve; see \cite{quasi}. However, in the study of semigroups, we usually demand quasi-geodesics to be a priori Lipschitz. For this reason, we demand the Lipschitz condition as well.
	\par Now, we need to somehow connect the notion of a quasi-geodesic with our study on semigroups of holomorphic functions. In \cite{asym}, the authors prove that for a non-elliptic semigroup $(\phi_t)$ with Denjoy-Wolff point $\tau$, every forward trajectory that converges non-tangentially to $\tau$ is actually a quasi-geodesic for the hyperbolic distance $k_\mathbb{D}$ of the unit disk (see also \cite[Theorem 17.3.3]{book}).
	\par A natural question that arises from this fact is whether the same is true for any backward orbit that converges non-tangentially to a point of the unit circle. In fact, the answer is, at least partly for now, positive and this is the main goal of this present article. We need the following definition in order to make a helpful distinction within the class of backward orbits of a semigroup; see \cite[p. 346]{book}.
	\begin{defin}
		Let $(\phi_t)$ be a semigroup in $\mathbb{D}$ and $\gamma:[0,+\infty)\to\mathbb{D}$ a backward orbit. Then, $\gamma$ is called \textit{regular} if
		$$V(\gamma):=\limsup\limits_{t\to+\infty}k_\mathbb{D}(\gamma(t),\gamma(t+1))<+\infty.$$
		The quantity $V(\gamma)$ is called the \textit{hyperbolic step} of $\gamma$.
	\end{defin}
	\par To truly understand the scope of our theorems, it would be useful to mention all the different cases with regard to the convergence of a backward orbit. To this goal, we require some notions concerning semigroups in general. First, a point $\sigma\in\partial\mathbb{D}$ is called a \textit{fixed point} of a semigroup $(\phi_t)$ if $\angle\lim\limits_{z\to\sigma}\phi_t(z)=\sigma$, for all $t\ge0$. Every non-elliptic semigroup has such a fixed point, the Denjoy-Wolff point. Any other fixed point that also has a finite angular derivative $\phi_t^{'}(\sigma)$ is characterized as a \textit{repelling} fixed point of the semigroup. For a repelling fixed point $\sigma$, there exists $\mu\in(-\infty,0)$ such that $\phi_t^{'}(\sigma)=e^{-\mu t}$, for all $t\ge0$. This $\mu$ is called the \textit{repelling spectral value} of $\sigma$. Finally, any fixed point $\sigma$ of the semigroup satisfying $\phi_t^{'}(\sigma)=\infty$, for some (and equivalently all) $t>0$, is characterized as a \textit{super-repelling} fixed point of the semigroup. The Denjoy-Wolff point along with any repelling fixed point of the semigroup are collectively known as the \textit{regular} fixed points of the semigroup. For more information on the fixed points of a semigroup, the reader may refer to \cite[Chapter 12]{book}.
	\par As for the convergence, a regular backward orbit converges non-tangentially to a point of the unit circle if and only if it converges to a repelling fixed point of the semigroup. On the other hand, a non-regular backward orbit converges non-tangentially only if it converges to a super-repelling fixed point, while the converse does not hold. More details with respect to the convergence of a backward orbit follow in Section 2.5. 
	\par We will prove the following:
	\begin{theorem}
		Let $(\phi_t)$ be a non-elliptic semigroup of holomorphic functions in the unit disk. Suppose that $\gamma:[0,+\infty)\to\mathbb{D}$ is a regular backward orbit for $(\phi_t)$ that converges non-tangentially to a point of the unit circle. Then, $\gamma$ is a quasi-geodesic for the hyperbolic distance $k_\mathbb{D}$ of the unit disk.
	\end{theorem}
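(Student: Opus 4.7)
The plan is to transfer the problem to the Koenigs model. Set $w_0 := h(\gamma(0)) \in \Omega$ and $\eta(t) := h(\gamma(t))$; the defining relation $\phi_s(\gamma(t)) = \gamma(t-s)$ combined with $\phi_t = h^{-1}(h(\cdot) + t)$ forces $\eta(t) = w_0 - t$. Thus in $\Omega$ the backward orbit is simply the horizontal ray running to $-\infty$. By conformal invariance of hyperbolic length and distance, it suffices to prove the quasi-geodesic inequality for $\eta$ in $(\Omega, k_\Omega)$.

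Since $\gamma$ is regular and converges non-tangentially, the discussion in Section 2.5 tells us that $\gamma$ must converge to a repelling fixed point $\sigma \in \partial \mathbb{D}$ with some spectral value $\mu < 0$. Standard backward-orbit theory (\cite{back}, \cite[Chapter 13]{book}) then gives a repelling petal $\Delta \subset \mathbb{D}$ containing $\gamma$, whose Koenigs image $S := h(\Delta)$ is a horizontal strip in $\Omega$ of width $\pi/|\mu|$; in particular, $\eta$ stays at the fixed height $c := \mathrm{Im}(w_0)$ strictly inside $S$. The key step for the \emph{length} bound is the Koebe-type estimate $\lambda_\Omega(w) \le 1/d(w, \partial \Omega)$, which combined with $S \subset \Omega$ and the constancy of $\mathrm{Im}(\eta(t))$ yields a uniform bound $\lambda_\Omega(\eta(t)) \le 1/d_0 =: M$, where $d_0$ is the vertical distance from $c$ to the edges of $S$. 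Therefore
\[
  l_\mathbb{D}(\gamma; [t_1, t_2]) = \int_{t_1}^{t_2} \lambda_\Omega(\eta(t))\, dt \le M (t_2 - t_1).
\]

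The main obstacle is the corresponding \emph{lower} bound on hyperbolic distance. I would invoke the standard linearization of the semigroup at the repelling fixed point $\sigma$: there is a holomorphic map $\psi$ near $\sigma$ with $\psi(\sigma) = 0$, $\psi'(\sigma) = 1$, satisfying $\psi \circ \phi_t = e^{-\mu t}\psi$. Applied to $\phi_s(\gamma(t+s)) = \gamma(t)$ this gives $|\psi(\gamma(t+s))| = e^{\mu s}|\psi(\gamma(t))|$. Combining this with non-tangentiality (which implies $|\gamma(t) - \sigma| \asymp 1 - |\gamma(t)|$) yields the exponential rate $1 - |\gamma(t)| \asymp e^{\mu t}$, whence a direct computation of $k_\mathbb{D}$ between two points in a common Stolz angle at $\sigma$ at prescribed Euclidean distances from $\sigma$ gives
\[
  k_\mathbb{D}(\gamma(t_1), \gamma(t_2)) \ge \tfrac{|\mu|}{2}(t_2 - t_1) - D
\]
uniformly in $t_1, t_2 \ge 0$. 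The uniformity is a consequence of the fact that the shifted orbit $s \mapsto \gamma(s + t_1)$ satisfies the same hypotheses with the same $\sigma$, $\mu$, and strip height, so the constants obtained from the linearization and the Stolz-angle estimate are transport-invariant.

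Combining the two bounds,
\[
  l_\mathbb{D}(\gamma; [t_1, t_2]) \le M(t_2 - t_1) \le \tfrac{2M}{|\mu|}\, k_\mathbb{D}(\gamma(t_1), \gamma(t_2)) + \tfrac{2MD}{|\mu|},
\]
which proves the quasi-geodesic inequality with $A = \max(2M/|\mu|, 1)$ and $B = 2MD/|\mu|$. The Lipschitz condition required by the definition is automatic from the uniform bound $\lambda_\Omega(\eta(t)) \le M$ on the hyperbolic speed.
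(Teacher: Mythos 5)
Your argument is correct in outline, but it reaches the crucial lower bound on $k_\mathbb{D}(\gamma(t_1),\gamma(t_2))$ by a genuinely different route than the paper. Both proofs obtain the upper bound $l_\mathbb{D}(\gamma;[t_1,t_2])\le M(t_2-t_1)$ the same way (the orbit sits at fixed height inside the maximal strip $S\subset\Omega$, and $\lambda_\Omega\le 1/\delta_\Omega$); your observation that the same speed bound $|\gamma'(t)|=\lambda_\Omega(\eta(t))(1-|\gamma(t)|^2)\le M$ gives the Lipschitz property for free is a nice shortcut compared with the paper's separate argument via the infinitesimal generator. For the lower bound, however, the paper stays entirely in the Koenigs domain: it picks boundary points $p^\pm\in\partial\Omega^\pm$ on either side of the orbit line, uses convexity in the positive direction to produce two horizontal slits $L^\pm\subset\mathbb{C}\setminus\Omega$, and compares $k_\Omega\ge k_B$ where $B$ is the doubly slit plane, in which the symmetry axis is an explicit geodesic with $\lambda_B\ge 1/(4\delta_B)$ and $\delta_B$ constant. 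You instead go dynamical: the linearization $\psi\circ\phi_t=e^{-\mu t}\psi$ at the repelling point gives $|\psi(\gamma(t))|=e^{\mu t}|\psi(\gamma(0))|$, non-tangentiality converts this to $1-|\gamma(t)|\asymp e^{\mu t}$, and the reverse triangle inequality $k_\mathbb{D}(z,w)\ge|k_\mathbb{D}(0,z)-k_\mathbb{D}(0,w)|$ yields $k_\mathbb{D}(\gamma(t_1),\gamma(t_2))\ge\frac{|\mu|}{2}(t_2-t_1)-D$. Your route produces an explicit constant $A\approx 2M/|\mu|$ tied to the spectral value, which the paper's proof does not; the paper's route is more elementary (only the Distance Lemma and domain monotonicity) and only needs one boundary point on each side of the orbit line, which is why the author can speculate in Remark 3.1 that the regularity hypothesis might be removable.

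One step in your write-up deserves more care: the existence of a linearizer $\psi$ with a \emph{finite, nonzero} angular derivative at the boundary point $\sigma$ (equivalently, the two-sided estimate $C_1e^{\mu t}\le 1-|\gamma(t)|\le C_2e^{\mu t}$ rather than merely $1-|\gamma(t)|=e^{\mu t+o(t)}$) is not automatic; it is the content of the pre-model theory at repelling fixed points and rests on the fact that the maximal strip has width exactly $\pi/|\mu|$ (the result of \cite{str}, see also \cite[Chapter 13]{book}). A concrete choice is $\psi=\exp(-\mu(h-ia^*))$, which satisfies the functional equation on all of $\mathbb{D}$; what must be cited is that this $\psi$ is conformal at $\sigma$ in the angular sense. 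With that reference supplied, your proof is complete. (Note also that the weaker rate $e^{\mu t+o(t)}$ would still give a linear lower bound $a(t_2-t_1)-b$ for some $a>0$, which is all the quasi-geodesic inequality needs, so the argument is robust even if one only quotes the Julia--Wolff--Carath\'eodory theorem for $\phi_1$ at $\sigma$.)
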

	
	\par Next, we are going to study backward orbits from a Euclidean perspective. We are in need of some terminology first. Let $\Omega\subsetneq\mathbb{C}$ be a simply connected domain. For $z\in\Omega$, we denote by $\delta_\Omega(z)$ the distance of $z$ from the boundary $\partial\Omega$ (i.e. $\delta_\Omega(z)=\text{dist}(z,\partial\Omega))$. It can be proved (see Section 2) that the image of a backward orbit $\gamma$ through the Koenigs function $h$ is a horizontal half-line. If, in addition, the backward orbit converges non-tangentially to a point of the unit circle, the carrier of the half-line separates $\partial\Omega$, where $\Omega=h(\mathbb{D})$, into two disjoint sets $\partial\Omega^+$ and $\partial\Omega^-$. We denote by $\partial\Omega^+$ the component that is ``over'' the half-line $h\circ\gamma$ and by $\partial\Omega^-$ the one ``below''. Then, for $z\in\Omega$, we set $\delta_\Omega^+(z)=\text{dist}(z,\partial\Omega^+)$ and $\delta_\Omega^-(z)=\text{dist}(z,\partial\Omega^-)$. More information and stricter definitions follow in the next sections. We will prove the following:
	\begin{theorem}
		Let $(\phi_t)$ be a non-elliptic semigroup of holomorphic functions in the unit disk with Koenigs function $h$. Suppose that $\gamma:[0,+\infty)\to\mathbb{D}$ is a backward orbit for $(\phi_t)$ that converges non-tangentially to a point of the unit circle. Set $\Omega=h(\mathbb{D})$. Then, there exists a constant $c\ge1$ such that
		$$\frac{1}{c}\cdot\delta_\Omega^-(h(\gamma(t)))\le\delta_\Omega^+(h(\gamma(t)))\le c\cdot\delta_\Omega^-(h(\gamma(t))),$$
		for all $t\ge0$.
	\end{theorem}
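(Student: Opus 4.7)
I work in the Koenigs domain $\Omega = h(\mathbb{D})$, where the image of the backward orbit is a horizontal half-line: by the functional equation for $h$, one has $h(\gamma(t)) = h(\gamma(0)) - t$. After a Euclidean translation of $\Omega$, I may assume $h(\gamma(0)) = 0$, so $h(\gamma(t)) = -t$; together with the forward trajectory of $h^{-1}(0)$, this forces $\mathbb{R} \subset \Omega$, and $\mathbb{R}$ separates $\partial\Omega$ into the arcs $\partial\Omega^{\pm}$. Writing $\rho^{\pm}(t) := \delta_\Omega^{\pm}(-t)$, the statement becomes $\rho^+(t) \asymp \rho^-(t)$ uniformly in $t \ge 0$.

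\textbf{Non-tangential input.} Non-tangential convergence of $\gamma$ to $\sigma \in \partial\mathbb{D}$ is equivalent to a uniform bound $k_\mathbb{D}(\gamma(t), \eta) \le M$ for some geodesic $\eta$ of $\mathbb{D}$ ending at $\sigma$. By conformal invariance, the half-line $\{-t : t \ge 0\}$ is then at bounded hyperbolic distance in $\Omega$ from the curve $h \circ \eta$, which accumulates at the $\sigma$-prime-end of $\Omega$. Combined with the Koebe estimate $\lambda_\Omega(w) \asymp 1/\delta_\Omega(w)$, this is intended to produce matching control of $\Omega$ on both sides of the real axis far to the left; the main task of the proof is to convert this hyperbolic-metric symmetry into the Euclidean symmetry $\rho^+ \asymp \rho^-$.

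\textbf{Concluding and expected obstacle.} In the regular case, $\sigma$ is a repelling fixed point with spectral value $\mu < 0$, and by the local Koenigs linearization at $\sigma$, $\Omega$ contains in a neighborhood of $-\infty$ a maximal horizontal strip of total height $\pi/|\mu|$ whose interior contains the half-line, while $\Omega$ is asymptotically trapped in a slightly larger strip; consequently $\rho^+(t)$ and $\rho^-(t)$ tend to positive finite limits, so the bound holds for $t$ large, and continuity and positivity of $\rho^{\pm}$ on any compact $[0,T]$ take care of the remaining range. In the non-regular case, $\sigma$ must be super-repelling; no Koenigs linearization is available, and both $\rho^{\pm}$ may decay to $0$. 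This is the main obstacle, and I would attack it by a two-sided Koebe-distortion argument along the half-line, reflecting the hyperbolic bound from the previous step through the auxiliary conformal maps of the two half-domains $\Omega \cap \{\pm \operatorname{Im}(w) > 0\}$.
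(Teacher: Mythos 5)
There is a genuine gap. Your setup (linearization $h(\gamma(t))=h(\gamma(0))-t$, reduction to $\rho^+\asymp\rho^-$ along a horizontal half-line) matches the paper, and your treatment of the regular case via the maximal strip is essentially sound: since $\mathbb{C}\setminus\Omega$ is invariant under translation to the left, $t\mapsto\delta_\Omega^{\pm}(h(\gamma(t)))$ is non-increasing, and the maximal strip forces both limits to be positive and finite, so the ratio is controlled. But the theorem is stated for \emph{all} backward orbits converging non-tangentially, and the paper explicitly stresses that its interest lies in covering the non-regular case (convergence to a super-repelling fixed point, where both $\rho^{\pm}$ can decay to $0$). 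For that case you only offer a plan (``a two-sided Koebe-distortion argument \dots through the auxiliary conformal maps of the two half-domains''), and even in your middle step you state that converting the hyperbolic-metric information into the Euclidean estimate is ``the main task of the proof'' without carrying it out. That conversion is precisely the content of the theorem, so the proof is not complete. Moreover, the suggested route is doubtful as stated: $\delta_\Omega^{+}$ and $\delta_\Omega^{-}$ are distances to the two pieces of $\partial\Omega$ measured \emph{inside the same domain} $\Omega$, not boundary distances of the half-domains $\Omega\cap\{\pm\Im w>\Im h(\gamma(0))\}$, so Koebe distortion for those auxiliary maps does not directly compare them; and bounded hyperbolic distance from a geodesic of $\Omega$ gives no immediate Euclidean symmetry because the geodesic's Euclidean position relative to $\partial\Omega^{\pm}$ is exactly what is unknown.

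The paper closes this gap with a different toolkit, which you may want to compare against. Non-tangential convergence to $\sigma$ is characterized by the harmonic measures $\omega(\gamma(t_n),\partial\mathbb{D}^{\pm},\mathbb{D})$ staying bounded away from $0$ and $1$ (level sets of harmonic measure of an arc are circular arcs meeting $\partial\mathbb{D}$ at angle $k\pi$). Arguing by contradiction, if $\delta^{+}(t_n)/\delta^{-}(t_n)\to+\infty$, one takes the radius $\gamma_n^-$ of the disk $D(h(\gamma(t_n)),\delta^-(n))$ pointing to the nearest point of $\partial\Omega^-$, observes via the subcurve/subfamily monotonicity of extremal length that $\lambda(\gamma_n^-,\partial\Omega^+,\Omega)\ge\lambda\bigl(\bigl[0,\delta^-(n)/\delta^+(n)\bigr],\partial\mathbb{D},\mathbb{D}\bigr)\to+\infty$ by the Gr\"otzsch-ring asymptotics, and then Beurling's estimate forces $\omega(h(\gamma(t_n)),\partial\Omega^+,\Omega)\to0$, i.e.\ tangential convergence --- a contradiction. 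This is the quantitative bridge from ``non-tangential'' to ``two-sided Euclidean comparability'' that your proposal identifies as necessary but does not supply.
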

	\par In fact, the above inequality is true for every forward trajectory of a semigroup that converges non-tangentially to the Denjoy-Wolff point (see \cite{asym}). Therefore, it is once again only natural to assume that the same should be true for the respective backward orbits as well.
	\par A first interesting observation into the statement of Theorem 1.2 is the fact that it is not limited only to regular backward orbits. Indeed, the proof that will be provided in Section 4 holds for non-regular backward orbits as well. Theorem 1.2 can be actually expressed in a more general statement that applies to a wider family of curves, but for the purposes of this present article, we will limit ourselves to backward orbits. After the proof, we will give some final insight on backward orbits with respect to the boundary distance.
	\par The structure of the article is as follows: in Section 2, we will provide all the necessary information that will deem useful for the proofs of our theorems. Then, in Section 3, we will deal with our first theorem. Finally, in Section 4, we will present our second theorem along with some relevant corollaries.

	\section{Preliminary Results}
	
	\subsection{Hyperbolic Distance}
	Having already introduced the notion of the hyperbolic distance, we now need to mention some of its properties. First of all, by the definition of the hyperbolic distance with regard to an arbitrary simply connected domain, it becomes clear that the hyperbolic distance is conformally invariant. In addition, the hyperbolic distance has the following monotonicity property: let $\Omega_1, \Omega_2$ be two simply connected domains, other than the complex plane, such that $\Omega_1\subset\Omega_2$. Then, for all $z,w\in\Omega_1$ we have
	$$k_{\Omega_2}(z,w)\le k_{\Omega_1}(z,w).$$
	\subsection{Quasi-hyperbolic Distance}
	Let $\Omega\subsetneq\mathbb{C}$ be a simply connected domain. Then, for any $z_1,z_2\in\Omega$, the \textit{quasi-hyperbolic distance} is defined as
	$$d_\Omega(z_1,z_2)=\inf\limits_{\gamma}\int\limits_{\gamma}\frac{|dz|}{\delta_\Omega(z)},$$
	where the infimum is taken over all curves $\gamma$ that join $z_1$ and $z_2$ inside $\Omega$. The following inequality (\cite[Theorem 5.2.1]{book}) correlates the hyperbolic and quasi-hyperbolic metrics:
	$$\frac{1}{4\delta_\Omega(z)}\le\lambda_\Omega(z)\le\frac{1}{\delta_\Omega(z)},$$
	for all $z\in\Omega$. An immediate corollary of the above inequality is the fact that
	$$\frac{d_\Omega(z_1,z_2)}{4}\le k_\Omega(z_1,z_2)\le d_\Omega(z_1,z_2),$$
	for all $z_1,z_2\in\Omega$. These two inequalities will play a principal role in the proof of our theorems. In particular, this next Distance Lemma provides a useful correlation between the hyperbolic distance and the Euclidean distance from the boundary.
	\begin{lm}{\cite[Theorem 5.3.1]{book}}
		Let $\Omega\subsetneq\mathbb{C}$ be a simply connected domain. Then, for every $z_1,z_2\in\Omega,$
		$$\frac{1}{4}\log\left(1+\frac{|z_1-z_2|}{\min\{\delta_\Omega(z_1),\delta_\Omega(z_2)\}}\right)\le k_\Omega(z_1,z_2)\le\int\limits_\Gamma\frac{|dw|}{\delta_\Omega(w)},$$
		where $\Gamma$ is any piecewise $C^1$-smooth curve joining $z_1$ to $z_2$ inside $\Omega$.
	\end{lm}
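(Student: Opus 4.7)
The lemma packages two inequalities, and I would drive both of them from the two-sided density comparison $\frac{1}{4\delta_\Omega(z)} \le \lambda_\Omega(z) \le \frac{1}{\delta_\Omega(z)}$ recalled immediately above in Section~2.2. The upper bound is essentially free: using only $\lambda_\Omega \le 1/\delta_\Omega$, for any admissible piecewise $C^1$-smooth curve $\Gamma$ from $z_1$ to $z_2$ in $\Omega$, the definition of $k_\Omega$ as an infimum of hyperbolic lengths gives
$$k_\Omega(z_1,z_2) \le \int_\Gamma \lambda_\Omega(w)|dw| \le \int_\Gamma \frac{|dw|}{\delta_\Omega(w)},$$
which is precisely the second asserted inequality.

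For the lower bound, the plan is to split the argument into two conceptually separate steps. First, I would trade the hyperbolic metric for the quasi-hyperbolic metric via $\lambda_\Omega \ge 1/(4\delta_\Omega)$, which integrates to $k_\Omega(z_1,z_2) \ge \tfrac{1}{4}d_\Omega(z_1,z_2)$, as already observed in Section~2.2. Second, I would estimate $d_\Omega$ from below by the claimed logarithmic expression. The key geometric input for this second step is that $\delta_\Omega$ is $1$-Lipschitz on $\Omega$: along any rectifiable curve $\gamma$ in $\Omega$ parameterized by arclength and starting at $z_1$, one has $\delta_\Omega(\gamma(s)) \le \delta_\Omega(z_1) + s$. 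Arranging labels so that $\delta_\Omega(z_1) = \min\{\delta_\Omega(z_1),\delta_\Omega(z_2)\}$ (which is free, since both sides of the target inequality are symmetric in $z_1,z_2$) and letting $\gamma$ end at $z_2$ with total length $L \ge |z_1-z_2|$, this pointwise bound gives
$$\int_\gamma \frac{|dw|}{\delta_\Omega(w)} \ge \int_0^L \frac{ds}{\delta_\Omega(z_1)+s} = \log\!\left(1+\frac{L}{\delta_\Omega(z_1)}\right) \ge \log\!\left(1+\frac{|z_1-z_2|}{\delta_\Omega(z_1)}\right).$$
Taking the infimum over admissible $\gamma$ yields the required lower bound on $d_\Omega(z_1,z_2)$, and combining with $k_\Omega \ge d_\Omega/4$ finishes the argument.

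I do not expect any genuine obstacle. The two points that deserve a line of care are (i) justifying the passage to arclength parameterization for the piecewise $C^1$ competitors in the definition of $d_\Omega$, so that the monotone substitution $s \mapsto \delta_\Omega(z_1)+s$ is legitimate, and (ii) the relabeling trick that produces the $\min$ in the denominator. All of the deeper analytic content — notably Koebe's $1/4$ theorem, which is what powers the density lower bound $\lambda_\Omega \ge 1/(4\delta_\Omega)$ — is already absorbed into the density comparison that Section~2.2 imports from the literature, so the proof itself reduces to the elementary one-variable integral above.
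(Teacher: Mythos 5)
Your argument is correct, and it is essentially the standard proof of this Distance Lemma (the paper itself states it as a quoted result from \cite[Theorem 5.3.1]{book} without proof, so there is nothing internal to compare against): the upper bound follows from $\lambda_\Omega\le 1/\delta_\Omega$, and the lower bound from $k_\Omega\ge d_\Omega/4$ together with the $1$-Lipschitz estimate $\delta_\Omega(\gamma(s))\le\delta_\Omega(z_1)+s$ along an arclength parameterization. The two points of care you flag (restricting to rectifiable competitors, and relabeling so the endpoint with smaller $\delta_\Omega$ is the base point) are exactly the right ones and are handled correctly.
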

	\subsection{Harmonic Measure}
	We provide here some basic facts concerning a second conformal invariant, the harmonic measure. For a comprehensive presentation of its theory, the reader may refer to \cite[Chapter 7]{book}, \cite{garn}, \cite[Chapter 1]{ots}. Let $\Omega\subsetneq\mathbb{C}$ be a simply connected domain with non-polar boundary and $B$ a Borel subset of $\partial\Omega$. Then, the \textit{harmonic measure} of $B$ with respect to $\Omega$ is exactly the solution of the generalized Dirichlet problem for the Laplacian in $\Omega$ with boundary function equal to $1$ on $B$ and to $0$ on $\partial\Omega\setminus B$. For the harmonic measure of $B$ with respect to $\Omega$ and for $z\in\Omega$, we use the notation $\omega(z,B,\Omega)$. It is known that for a fixed $z\in\Omega$, $\omega(z,\cdot,\Omega)$ is a Borel probability measure on $\partial\Omega$. 
	\begin{rem}
		Let $B\subset\partial\mathbb{D}$ be a circular arc with endpoints $\sigma,\tau\in\partial\mathbb{D}$. Then (see e.g. \cite[p.155]{cara}), the level set
		$$L_k=\{z\in\mathbb{D}:\omega(z,B,\mathbb{D})=k\},\quad0<k<1,$$
		is a circular arc (or a diameter in case $B$ is a half-circle and $k=\frac{1}{2}$) with endpoints $\sigma$ and $\tau$ that meets the unit circle with angle $k\cdot\pi$. As a result, a sequence $\{z_n\}\subset\mathbb{D}$ that converges to $\sigma$ or $\tau$, converges non-tangentially if and only if 
		$$0<\liminf\limits_{n\to+\infty}\omega(z_n,B,\mathbb{D})\le\limsup\limits_{n\to+\infty}\omega(z_n,B,\mathbb{D})<1.$$
		If either the liminf is $0$ or the limsup is $1$, then $\{z_n\}$ contains a subsequence that converges tangentially and thus, we cannot say that the convergence of $\{z_n\}$ is non-tangential.
	\end{rem}
	\subsection{Extremal Length}
	Another conformally invariant quantity we are going to need through the course of the article is the extremal length. For a detailed presentation of the theory of the extremal length, the interested reader may refer to \cite{ots}.
	\par Let $\Gamma$ be a family of locally rectifiable curves inside a domain $\Omega\subsetneq\mathbb{C}$. We consider all the non-negative Borel measurable functions $\rho$ defined on $\Omega$ such that the $\rho$\textit{-area}
	$$A(\rho,\Omega)=\iint\limits_{\Omega}\rho(z)^2dxdy,\quad z=x+iy,$$
	is a positive real number. We define the $\rho$\textit{-length} of $\Gamma$ as
	$$L(\rho,\Gamma)=\inf\limits_{\gamma\in\Gamma}\int\limits_{\gamma}\rho(z)|dz|.$$
	Then, the \textit{extremal length} of $\Gamma$ with respect to $\Omega$ is the quantity
	$$\lambda_\Omega(\Gamma)=\sup\limits_{\rho}\frac{L(\rho,\Gamma)^2}{A(\rho,\Omega)},$$
	where the supremum is taken over all non-negative Borel measurable functions $\rho$ such that $A(\rho,\Omega)\in(0,+\infty)$.
	\par Next, we consider two families of locally rectifiable curves $\Gamma_1,\Gamma_2$ defined on two domains $\Omega_1,\Omega_2\subsetneq\mathbb{C}$ respectively. If the family $\Gamma_1$ is a subfamily of the family $\Gamma_2$, then by the definition of extremal length, it is easily understood that $\lambda_{\Omega_1}(\Gamma_1)\ge\lambda_{\Omega_2}(\Gamma_2)$. Moreover, we are going to need the following lemma.
	\begin{lm}{\cite[Lemma 8.4]{vuor}}
		If every curve of the family $\Gamma_1$ contains a subcurve that belongs to the family $\Gamma_2$, then $\lambda_{\Omega_1}(\Gamma_1)\ge\lambda_{\Omega_2}(\Gamma_2)$.
	\end{lm}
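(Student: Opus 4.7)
The plan is to derive the inequality directly from the definition of extremal length, by producing, for each admissible weight $\rho_2$ on $\Omega_2$, an admissible weight $\rho_1$ on $\Omega_1$ whose ratio $L(\rho_1,\Gamma_1)^2/A(\rho_1,\Omega_1)$ dominates $L(\rho_2,\Gamma_2)^2/A(\rho_2,\Omega_2)$.

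Concretely, I would fix a non-negative Borel measurable $\rho_2$ on $\Omega_2$ with $A(\rho_2,\Omega_2)\in(0,+\infty)$, extend it to the whole plane by zero, and let $\rho_1$ denote its restriction to $\Omega_1$. The extension is still Borel measurable, and since only zeros were inserted, $A(\rho_1,\Omega_1)\le A(\rho_2,\Omega_2)$. The main step invokes the subcurve hypothesis: given $\gamma_1\in\Gamma_1$, choose a subcurve $\gamma_1'\subseteq\gamma_1$ with $\gamma_1'\in\Gamma_2$. Because curves of $\Gamma_2$ live inside $\Omega_2$ while $\gamma_1'\subseteq\gamma_1\subseteq\Omega_1$, the curve $\gamma_1'$ lies in $\Omega_1\cap\Omega_2$, on which $\rho_1$ agrees with $\rho_2$. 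Hence
$$\int_{\gamma_1}\rho_1(z)\,|dz|\;\ge\;\int_{\gamma_1'}\rho_1(z)\,|dz|\;=\;\int_{\gamma_1'}\rho_2(z)\,|dz|\;\ge\;L(\rho_2,\Gamma_2).$$
Taking the infimum over $\gamma_1\in\Gamma_1$ yields $L(\rho_1,\Gamma_1)\ge L(\rho_2,\Gamma_2)$, and combining with the area bound gives
$$\lambda_{\Omega_1}(\Gamma_1)\;\ge\;\frac{L(\rho_1,\Gamma_1)^2}{A(\rho_1,\Omega_1)}\;\ge\;\frac{L(\rho_2,\Gamma_2)^2}{A(\rho_2,\Omega_2)}.$$
Passing to the supremum over $\rho_2$ concludes the argument.

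The only subtle point, and what I expect to be the main wrinkle, is the degenerate case $A(\rho_1,\Omega_1)=0$, which occurs when $\rho_2$ is essentially supported in $\Omega_2\setminus\Omega_1$. There, every $\Gamma_2$-curve that serves as a subcurve of an element of $\Gamma_1$ must sit inside $\Omega_1\cap\Omega_2$, where $\rho_2\equiv 0$, and the chain above forces $L(\rho_2,\Gamma_2)=0$, making the inequality vacuous for that weight. Thus the outer supremum may be legitimately restricted to those $\rho_2$ for which $\rho_1$ is admissible, and the proof goes through without further care.
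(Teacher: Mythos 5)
The paper does not actually prove this lemma; it is quoted verbatim from Anderson--Vamanamurthy--Vuorinen, so there is no in-paper argument to compare against. Your main line of reasoning is the standard proof of the ``overflowing'' principle, transposed from the usual modulus formulation (infimum of $A(\rho,\Omega)$ over admissible $\rho$) to the $\sup L^2/A$ formulation used in Section 2.4, and it is correct: restricting $\rho_2$ to $\Omega_1$ can only decrease the $\rho$-area, while the subcurve hypothesis gives $\int_{\gamma_1}\rho_1\,|dz|\ge\int_{\gamma_1'}\rho_2\,|dz|\ge L(\rho_2,\Gamma_2)$ precisely because $\gamma_1'\subset\Omega_1\cap\Omega_2$, where the two weights agree.

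The one step that does not hold up is your dismissal of the degenerate case. From $A(\rho_1,\Omega_1)=\iint_{\Omega_1\cap\Omega_2}\rho_2^2\,dx\,dy=0$ you may only conclude that $\rho_2$ vanishes \emph{almost everywhere with respect to area} on $\Omega_1\cap\Omega_2$; a rectifiable curve is an area-null set, so this does not force $\int_{\gamma_1'}\rho_2\,|dz|=0$, and $L(\rho_2,\Gamma_2)$ need not vanish (take $\rho_2$ to be the indicator of a fixed arc in $\Omega_1\cap\Omega_2$ that every curve of $\Gamma_2$ must cross). Hence ``the chain above forces $L(\rho_2,\Gamma_2)=0$'' is false, and you cannot simply restrict the outer supremum. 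The fix is cheap: if such a $\rho_2$ has $L(\rho_2,\Gamma_2)>0$, set $\rho_1^{(\epsilon)}=\rho_2|_{\Omega_1}+\epsilon\cdot\mathbf{1}_K$ for a bounded set $K\subset\Omega_1$ of positive area. Then $A(\rho_1^{(\epsilon)},\Omega_1)=\epsilon^2\,\mathrm{area}(K)\in(0,+\infty)$ while the lower bound $L(\rho_1^{(\epsilon)},\Gamma_1)\ge L(\rho_2,\Gamma_2)$ survives, so letting $\epsilon\to0^+$ yields $\lambda_{\Omega_1}(\Gamma_1)=+\infty$ and the asserted inequality holds trivially for that $\rho_2$. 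With this patch your proof is complete.
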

	\par In this present article, along with the above two properties, we will mostly need one specific instance of the extremal length. Once again, let $\Omega\subsetneq\mathbb{C}$ be a domain and let $E,F\subset\overline{\Omega}$. Then, the \textit{extremal distance} between $E$ and $F$ inside $\Omega$ is defined as the quantity
	$$\lambda(E,F,\Omega)=\lambda_\Omega(\Gamma),$$
	where $\Gamma$ is the family of all locally rectifiable curves that join $E$ to $F$ inside $\Omega$. By extension, the extremal distance is also conformally invariant.
	\par A very useful tool that connects the extremal distance with the harmonic measure is the following inequality, the so-called \textit{Beurling's estimate}.
	\begin{lm}{\cite[Theorem H.7]{garn}}
		Let $\Omega$ be a simply connected domain and let $B$ be a finite union of arcs lying on $\partial\Omega$. Let $z\in\Omega$. Then,
		$$\omega(z,B,\Omega)\le C\cdot\exp(-\pi\cdot\lambda(\gamma,B,\Omega)),$$
		for every Jordan arc $\gamma$ joining $z$ to $\partial\Omega\setminus B$ inside $\Omega$, where $C>0$ is an absolute constant.
	\end{lm}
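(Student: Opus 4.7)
The plan is to pass to the Koenigs domain $\Omega$, translate the non-tangential convergence of $\gamma$ into a two-sided harmonic-measure bound, and then read off comparability of $\delta_\Omega^+$ and $\delta_\Omega^-$ via Beurling's estimate paired with the modulus of a concentric Euclidean annulus.

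Using $h\circ\phi_s=h+s$ together with the defining relation $\gamma(t-s)=\phi_s(\gamma(t))$, one gets $h(\gamma(t))=h(\gamma(0))-t$; thus $w_t:=h(\gamma(t))$ traces the horizontal half-line emanating from $h(\gamma(0))$ towards $-\infty$, and by the starlikeness at infinity of $\Omega$ the entire horizontal line $L$ through $h(\gamma(0))$ lies in $\Omega$. This $L$ separates $\partial\Omega$ into $\partial\Omega^+$ and $\partial\Omega^-$ as in the statement, while the prime-end extension of $h^{-1}$ sends these onto complementary arcs $A^+,A^-\subset\partial\mathbb{D}$ sharing the endpoints $\sigma$ (the limit of $\gamma$, corresponding to the $-\infty$ end of $L$) and $\tau$ (the Denjoy-Wolff point, corresponding to the $+\infty$ end). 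Writing $d^\pm(t):=\delta_\Omega^\pm(w_t)$ and applying Remark 2.1 with $B=A^+$, the non-tangential convergence $\gamma(t)\to\sigma$ forces both $\omega(\gamma(t),A^+,\mathbb{D})$ and $\omega(\gamma(t),A^-,\mathbb{D})=1-\omega(\gamma(t),A^+,\mathbb{D})$ to lie in a compact subinterval of $(0,1)$ for large $t$; by conformal invariance there is a constant $c_1>0$ with $\omega(w_t,\partial\Omega^\pm,\Omega)\ge c_1$ for all such $t$.

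The core step is Beurling's inequality (Lemma 2.3). Applied with $B=\partial\Omega^-$ and with $\gamma_t$ the Euclidean segment from $w_t$ to a nearest point of $\partial\Omega^+$ (length $d^+(t)$, joining $w_t$ to $\partial\Omega\setminus\partial\Omega^-$), it gives
$$\omega(w_t,\partial\Omega^-,\Omega)\le C\exp\bigl(-\pi\,\lambda(\gamma_t,\partial\Omega^-,\Omega)\bigr).$$
Any rectifiable curve in $\Omega$ from a point of $\gamma_t$ to $\partial\Omega^-$ starts at Euclidean distance $\le d^+(t)$ from $w_t$ and ends at distance $\ge d^-(t)$ from $w_t$; assuming $d^+(t)<d^-(t)$, it therefore contains a subcurve lying in the open annulus $\mathcal{A}=\{d^+(t)<|w-w_t|<d^-(t)\}$ and joining its two bounding circles. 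By the monotonicity Lemma 2.2 applied with ambient domain $\mathcal{A}$, the extremal distance in the display above is bounded below by the modulus of $\mathcal{A}$, namely $(2\pi)^{-1}\log(d^-(t)/d^+(t))$. Combining this with the lower bound $\omega(w_t,\partial\Omega^-,\Omega)\ge c_1$ yields $d^-(t)\le(C/c_1)^2\, d^+(t)$ when $d^-(t)\ge d^+(t)$ (otherwise the inequality is trivial). Swapping the roles of $+$ and $-$ produces the reverse bound, hence comparability for all large $t$; continuity and positivity of $d^\pm$ on any initial compact interval $[0,T]$ handle the remaining values.

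The main obstacle I anticipate is the bookkeeping around the prime-end correspondence between the arcs $A^\pm\subset\partial\mathbb{D}$ and the geometric boundary pieces $\partial\Omega^\pm$, since $\sigma$ may be either a repelling or a super-repelling fixed point and the regularity of $\partial\Omega$ near its $-\infty$ end varies accordingly; nevertheless Remark 2.1 only uses that $\sigma$ and $\tau$ are the endpoints of $A^\pm$, which is guaranteed purely by the Koenigs dynamics. A secondary technical point worth checking is that the Euclidean annulus $\mathcal{A}$ need not be contained in $\Omega$, but Lemma 2.2 is formulated precisely to allow this cross-domain modulus comparison.
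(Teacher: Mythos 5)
The statement you were asked to prove is Lemma 2.3, Beurling's estimate: for an \emph{arbitrary} simply connected domain $\Omega$, a finite union of boundary arcs $B$, a point $z\in\Omega$, and \emph{any} Jordan arc $\gamma$ joining $z$ to $\partial\Omega\setminus B$ inside $\Omega$, one has $\omega(z,B,\Omega)\le C\exp(-\pi\cdot\lambda(\gamma,B,\Omega))$ with an absolute constant $C$. Your proposal does not address this statement: it is a proof sketch of Theorem 1.2 (the comparability of $\delta_\Omega^+$ and $\delta_\Omega^-$ along a backward orbit), and its ``core step'' explicitly invokes Beurling's inequality, i.e.\ Lemma 2.3 itself. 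Relative to the actual target this is circular --- you assume the very inequality you were asked to establish. Nothing in the argument engages with what a proof of the lemma requires, namely a general potential-theoretic comparison between harmonic measure and extremal distance valid for every simply connected domain and every admissible arc $\gamma$; the standard route (the one in Garnett--Marshall, from which the paper imports the lemma without proof) uniformizes the configuration, reduces to a length--area estimate on the conjugate curve family, and compares $\omega$ with $e^{-\pi\lambda}$ through the extremal metric of a rectangle. None of the semigroup machinery you deploy (Koenigs function, backward orbits, non-tangential convergence, the arcs $\partial\mathbb{D}^\pm$) is relevant to the lemma, which makes no reference to dynamics at all.

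As a side remark, the argument you did write is internally sensible and is in fact a slightly cleaner route to Theorem 1.2 than the paper's own proof: you bound the extremal distance from below by the modulus $(2\pi)^{-1}\log\bigl(d^-(t)/d^+(t)\bigr)$ of a concentric Euclidean annulus via Lemma 2.2, and you run the estimate directly rather than by contradiction, whereas the paper passes through a Gr\"otzsch ring and the qualitative fact that $\lambda([0,r],\partial\mathbb{D},\mathbb{D})\to+\infty$ as $r\to0^+$. So the work is not wasted --- it is simply an answer to a different question than the one posed.
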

	\par Finally, for $r\in(0,1)$, we call the doubly connected domain $\mathbb{D}\setminus[0,r]$ a \textit{Gr\"{o}tzsch ring} (see Figure 1). Through the study of Gr\"{o}tzsch rings (see \cite{ots}), a lot of information has arisen about the extremal distance between the line segment $[0,r]$ and the unit circle inside the unit disk. In particular, it has been proved that $\lim\limits_{r\to0^+}\lambda([0,r],\partial\mathbb{D},\mathbb{D})=+\infty$ (see \cite[Theorem 2.80]{ots}), a fact that we will need in the final stages of the proof of Theorem 1.2.
	\begin{figure}
		\centering
		\includegraphics[scale=0.6]{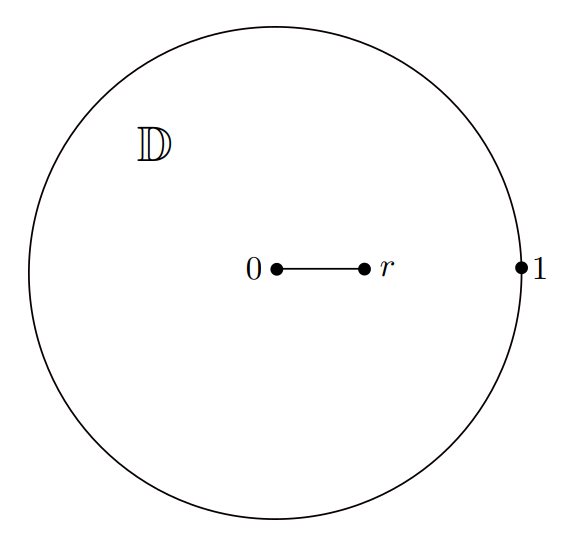}
		\caption{A Gr\"{o}tzsch ring}
	\end{figure}
	\subsection{Backward Orbits}
	Let $\gamma:[0,+\infty)\to\mathbb{D}$ be a backward orbit for a non-elliptic semigroup $(\phi_t)$ with Denjoy-Wolff point $\tau$ and Koenigs function $h$. Set $\Omega=h(\mathbb{D})$. By the definition of backward orbits, we have
	$$\phi_t(\gamma(t))=\gamma(t-t)=\gamma(0),$$
	for all $t\ge0$. Remembering the basic property of the Koenigs function, we get $h(\gamma(t))=h(\gamma(0))-t, t\ge0$. Therefore, we see that $h$ linearizes backward orbits, turning them to half-lines that converge to $\infty$ through the negative direction (i.e. with strictly decreasing real part and constant imaginary part).
	\par Next, since $\gamma(0)\in\mathbb{D}$, we can find the forward trajectory of $\gamma(0)$ under the semigroup along with its image through $h$. More precisely, we can find the image of the set $\{\phi_t(\gamma(0)):t\ge0\}$. In a similar fashion, its image in $\Omega$ is the half-line $\{h(\gamma(0))+t:t\ge0\}$ which converges to $\infty$ through the positive direction. As a result, the union of the images of the backward orbit and the forward trajectory that emanate from $\gamma(0)$ is the horizontal line $\{h(\gamma(0))+t:t\in\mathbb{R}\}$. Clearly, this line separates $\partial\Omega$ into two disjoint sets $\partial\Omega^+=\{w\in\partial\Omega:\Im w>\Im h(\gamma(0))\}$ and $\partial\Omega^-=\{w\in\partial\Omega:\Im w<\Im h(\gamma(0))\}$.
	\par Having already mentioned some details in the Introduction, we need some information concerning the convergence of a backward orbit. All the following results are drawn from \cite[Chapter 13]{book}. If $\gamma$ is regular, then either $\lim\limits_{t\to+\infty}\gamma(t)=\sigma\in\partial\mathbb{D}\setminus\{\tau\}$, where $\sigma$ is a repelling fixed point of the semigroup, and the convergence is non-tangential or $\lim\limits_{t\to+\infty}\gamma(t)=\tau$ and the convergence is tangential. In the former case, the associated planar domain $\Omega$ must contain a maximal horizontal strip and the image of the backward orbit through $h$ is contained in this strip. In the latter case, $\Omega$ must contain a maximal horizontal half-plane and the image of the backward orbit through $h$ is contained in this half-plane.
	\par Here, it seems useful to make some comments concerning prime ends. Since $h$ maps the unit disk $\mathbb{D}$ conformally onto the Koenigs domain $\Omega$, by Carath\'{e}dory's Theorem (\cite[Theorem 4.2.3]{book}), the points of the unit circle $\partial\mathbb{D}$ correspond one-to-one to the prime ends of $\Omega$. However, multiple prime ends of $\Omega$ might correspond to the same point of $\partial\Omega$. For example, if for some backward orbit $\gamma:[0,+\infty)\to\mathbb{D}$ both $\partial\Omega^+$ and $\partial\Omega^-$ are non-empty, then there are at least two prime ends of $\Omega$ corresponding to $\infty$. This is because $\lim\limits_{t\to+\infty}h(\gamma(t))=\lim\limits_{t\to+\infty}h(\phi_t(\gamma(0)))=\infty$, while $\lim\limits_{t\to+\infty}\gamma(t)\neq\tau$. Combining all the above, we can say that $\partial\Omega^+$ corresponds to a set of prime ends of $\Omega$ which in turn corresponds through $h^{-1}$ to a set of points $\partial\mathbb{D}^+$ on the unit circle. Similarly, $\partial\Omega^-$ corresponds to a set $\partial\mathbb{D}^-$. It is easy to check that $\partial\mathbb{D}^+$ and $\partial\mathbb{D}^-$ are disjoint circular arcs with endpoints $\lim\limits_{t\to+\infty}\gamma(t)$ and $\tau$, whose union is the whole unit circle. For more information on prime ends and how they connect to semigroups of holomorphic functions, we refer to \cite[Chapter 11]{book}.
	\par Using a previous remark that the image of a backward orbit in $\Omega$ must tend to $\infty$ through the negative direction, we can understand that there are only three categories left for the image through $h$ of a non-regular backward orbit (see Figure 2):
	\begin{figure}
			\subfloat[$\gamma_1$ is non-regular, $\gamma_2$ is regular and they both converge to a repelling point]{\label{b}\includegraphics[scale=0.45]{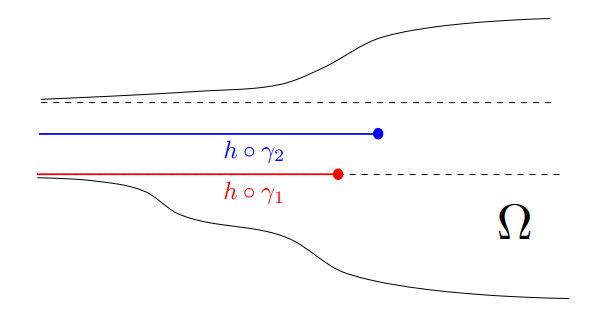}}\hfill
			\subfloat[$\gamma_1$ is non-regular, $\gamma_2$ is regular and they both converge to the Denjoy-Wolff point]{\label{a}\includegraphics[scale=0.45]{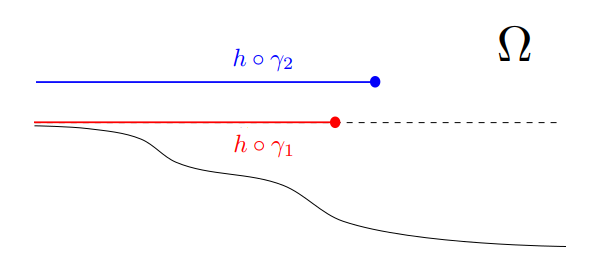}}	\par
			\subfloat[$\gamma$ is non-regular and converges to a super-repelling point]{\label{c}\includegraphics[scale=0.45]{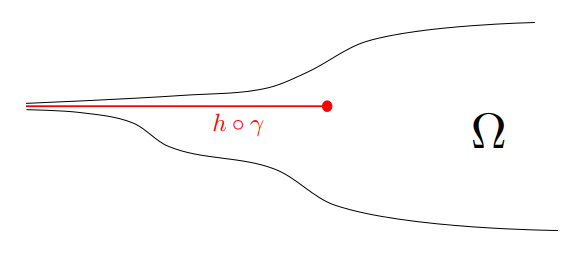}}
			\caption{Images of backward orbits through the Koenigs function}
	\end{figure}
	\begin{enumerate}[(i)]
		\item either it is part of the boundary of a maximal horizontal strip, in which case the non-regular backward orbit itself converges tangentially to a repelling fixed point,
		\item or it is part of the boundary of a maximal horizontal half-plane, in which case the non-regular backward orbit itself converges tangentially to the Denjoy-Wolff point,
		\item or the image gets arbitrarily close to both $\partial\Omega^+$ and $\partial\Omega^-$, in which case the non-regular backward orbit itself converges to a super-repelling fixed point either tangentially or non-tangentially, or can even contain sequences of points that converge tangentially and other sequences that converge non-tangentially.
	\end{enumerate}
	It can be easily proved that any backward orbit that converges tangentially to a point of the unit circle cannot be a quasi-geodesic for the hyperbolic distance $k_\mathbb{D}$ of the unit disk. Therefore, we limit our search for quasi-geodesics, only in the case of a backward orbit converging non-tangentially.
	\subsection{Infinitesimal Generators}
	Let $(\phi_t)$ be a semigroup in $\mathbb{D}$. Then, there exists a unique holomorphic function $G:\mathbb{D}\to\mathbb{C}$ such that
	$$\frac{\partial\phi_t(z)}{\partial t}=G(\phi_t(z)),\quad\quad z\in\mathbb{D},\quad t\ge0.$$
	The function $G$ is called the \textit{infinitesimal generator} of $(\phi_t)$; see \cite[Chapter 10]{book}. We need the notion of the infinitesimal generators in order to prove that certain backward orbits are actually Lipschitz curves. We are going to mainly need the following lemma:
	\begin{lm}{\cite[Proposition 12.2.2]{book}}
		Let $G$ be the infinitesimal generator of a semigroup in $\mathbb{D}$ and let $\sigma\in\partial\mathbb{D}$ be a repelling fixed point of the semigroup. Then, $\angle\lim\limits_{z\to\sigma}G(z)=0$.
	\end{lm}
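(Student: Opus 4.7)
The plan is to combine the Berkson--Porta representation of the generator with the Julia--Wolff--Carath\'eodory behavior of each $\phi_t$ at $\sigma$, and to conclude via a passage to the limit inside the integrated semigroup equation.

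First, I would recall that $G$ admits the Berkson--Porta decomposition
$$G(z) = (\tau - z)(1-\bar\tau z)\,p(z), \qquad z\in\mathbb{D},$$
where $\tau$ is the Denjoy--Wolff point and $p\colon\mathbb{D}\to\mathbb{C}$ is holomorphic with $\operatorname{Re}p\ge 0$. Since $\sigma\neq \tau$, the polynomial factor $(\tau-z)(1-\bar\tau z)$ has a non-zero limit at $\sigma$, so the claim $\angle\lim_{z\to\sigma}G(z)=0$ reduces to the statement $\angle\lim_{z\to\sigma}p(z)=0$.

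Second, because $\sigma$ is repelling with spectral value $\mu<0$, each $\phi_t$ has $\angle\lim_{z\to\sigma}\phi_t(z)=\sigma$ with finite angular derivative $e^{-\mu t}$. Julia's inequality applied to every $\phi_t$ then provides the key geometric input: for any Stolz region $\Delta$ at $\sigma$ and any compact time interval $[0,T]$, there is a Stolz region $\Delta'\supseteq\Delta$ such that $\phi_s(z)\in\Delta'$ for every $s\in[0,T]$, provided $z\in\Delta$ is close enough to $\sigma$. Next, I would use the integrated semigroup identity $\phi_t(z)-z=\int_0^t G(\phi_s(z))\,ds$. For any sequence $z_n\to\sigma$ nontangentially inside $\Delta$, the left-hand side tends to $0$ for every fixed $t$, hence
$$\int_0^t G(\phi_s(z_n))\,ds \longrightarrow 0,\qquad n\to\infty,\ t>0.$$
The Herglotz factor $p$ is bounded on each Stolz subregion of $\Delta'$ via a Harnack-type estimate (since $\operatorname{Re}p\ge 0$), so one may pass to a subsequence along which $p(z_n)\to c$ for some $c\in\mathbb{C}$ with $\operatorname{Re}c\ge 0$. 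A uniform-in-$s$ comparison on $[0,T]$, based on continuity of $s\mapsto\phi_s$ in the compact-open topology together with the Julia control above, should then yield $G(\phi_s(z_n))\to(\tau-\sigma)(1-\bar\tau\sigma)\,c$ uniformly in $s$. Inserting this into the displayed integral gives $t\,c\,(\tau-\sigma)(1-\bar\tau\sigma)=0$ for every small $t>0$, forcing $c=0$.

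The hardest step will be justifying the uniformity in $s$ of the convergence $G(\phi_s(z_n))\to(\tau-\sigma)(1-\bar\tau\sigma)\,c$. This requires controlling, uniformly in $s\in[0,T]$, how nontangentially $\phi_s(z_n)$ approaches $\sigma$; the ingredients are a quantitative form of Julia's lemma for each $\phi_s$ and a uniform Herglotz/Harnack estimate for the factor $p$ on nested Stolz regions around $\sigma$. Once this uniformity is in hand, the argument above closes and shows that every nontangential subsequential limit of $p$ at $\sigma$ is zero, which is equivalent to $\angle\lim_{z\to\sigma}p(z)=0$ and thus to $\angle\lim_{z\to\sigma}G(z)=0$.
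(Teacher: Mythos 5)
This lemma is imported verbatim from \cite[Proposition 12.2.2]{book} and is not proved in the paper, so there is no in-paper argument to compare against; I can only judge your proposal on its own terms. The skeleton is reasonable --- the Berkson--Porta factorization $G(z)=(\tau-z)(1-\bar\tau z)p(z)$ with $\operatorname{Re}p\ge0$, the observation that the prefactor has a nonzero limit at $\sigma\neq\tau$ so that everything reduces to $\angle\lim_{z\to\sigma}p(z)=0$, and the integrated equation $\phi_t(z)-z=\int_0^t G(\phi_s(z))\,ds$ are all correct. But two steps as written are genuinely broken. First, the claim that $\operatorname{Re}p\ge0$ plus a ``Harnack-type estimate'' makes $p$ bounded on Stolz regions at $\sigma$ is false: the Herglotz function $p(z)=\frac{1+\bar\sigma z}{1-\bar\sigma z}$ has nonnegative real part and blows up even radially at $\sigma$. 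Harnack only gives growth bounds of the type $\operatorname{Re}p(z)\le\frac{1+|z|}{1-|z|}\operatorname{Re}p(0)$, which degenerate at the boundary. (At a repelling point $p$ \emph{is} in fact bounded nontangentially, but only as a consequence of the very statement you are proving.) Second, and more seriously, the asserted uniform convergence $G(\phi_s(z_n))\to(\tau-\sigma)(1-\bar\tau\sigma)c$ is the conclusion in disguise: for $s>0$ the points $\phi_s(z_n)$ form a different family tending to $\sigma$, and without already knowing that $p$ has a nontangential limit there is no reason its values there should approach the same subsequential limit $c$ attained along $\{z_n\}$. Julia's lemma and continuity of $s\mapsto\phi_s$ control \emph{where} the points $\phi_s(z_n)$ sit, not what $p$ does on them; so the argument, as stated, is circular at its key step.

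The proof can be repaired, but the repair is the real content. The usable tool is that $p$ maps $\mathbb{D}$ into the closed half-plane $\{w:\operatorname{Re}w\ge0\}$ and is therefore a contraction for the respective hyperbolic distances, so $k_{\{\operatorname{Re}w>0\}}(p(z_n),p(\phi_s(z_n)))\le k_\mathbb{D}(z_n,\phi_s(z_n))$; one must then establish the uniform bound $\sup_n\sup_{s\in[0,T]}k_\mathbb{D}(z_n,\phi_s(z_n))<+\infty$ (this is where the repelling hypothesis genuinely enters, e.g.\ via the petal/strip picture in the Koenigs model) and run a case analysis on the subsequential limit $c$: if $\operatorname{Re}c>0$ the values $p(\phi_s(z_n))$ stay in a fixed compact subset of the open half-plane and a suitably rotated real part of the integrand is bounded below, contradicting $\int_0^tG(\phi_s(z_n))\,ds\to0$; if $c\in i\mathbb{R}\setminus\{0\}$ the hyperbolic balls collapse and you do get the uniform convergence you wanted; and the case $c=\infty$ must be handled separately, since boundedness of $p$ was never available. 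Alternatively --- and this is the standard route behind \cite[Proposition 12.2.2]{book} --- one uses the characterization of boundary regular fixed points in terms of the generator: $\sigma$ is a repelling fixed point precisely when $\angle\lim_{z\to\sigma}\frac{G(z)}{z-\sigma}$ exists finitely (equal to $-\mu>0$ in the paper's normalization), after which $\angle\lim_{z\to\sigma}G(z)=\bigl(\angle\lim_{z\to\sigma}\frac{G(z)}{z-\sigma}\bigr)\cdot\lim_{z\to\sigma}(z-\sigma)=0$ is immediate. Your proposal is, in effect, an attempt to reprove a weaker form of that characterization, and the two gaps above are exactly the points where the known proof does nontrivial work.
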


	\section{Backward Orbits and Quasi-Geodesics}
	The first goal of this article is to prove that any regular backward orbit that converges non-tangentially to a point of the unit circle is a quasi-geodesic for the hyperbolic distance of the unit disk. By the definition of quasi-geodesics, we first need to show that any such backward orbit is a Lipschitz curve. The proof of the following proposition follows almost exactly the same steps as the proof of Proposition 10.1.7(4) in \cite{book}.
	\begin{prop}
		Let $(\phi_t)$ be a non-elliptic semigroup in $\mathbb{D}$ and let $\gamma:[0,+\infty)\to\mathbb{D}$ be a regular backward orbit for $(\phi_t)$ that converges to a point of the unit circle non-tangentially. Then $\gamma$ is a Lipschitz curve.
	\end{prop}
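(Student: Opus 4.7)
The plan is to reduce Lipschitz continuity of $\gamma$ to a boundedness statement for the infinitesimal generator $G$ of the semigroup, composed with $\gamma$. The key ingredient is that a backward orbit solves the same ODE as a forward trajectory, with a sign change.

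First I would use the defining relation $\phi_s(\gamma(t))=\gamma(t-s)$. Fixing $t$ and letting $u=t-s$ vary in $[0,t]$, this gives $\gamma(u)=\phi_{t-u}(\gamma(t))$. Differentiating in $u$ and using the defining property of $G$, namely $\partial_t\phi_t=G\circ\phi_t$, I obtain
$$\gamma'(u)=-G(\phi_{t-u}(\gamma(t)))=-G(\gamma(u)).$$
Since $t\geq u$ was arbitrary, this identity holds for every $u\in[0,+\infty)$, and in particular $\gamma$ is smooth with $|\gamma'(u)|=|G(\gamma(u))|$. It therefore suffices to prove that $G\circ\gamma$ is bounded on $[0,+\infty)$; then for any $0\le t_1\le t_2$ we get
$$|\gamma(t_2)-\gamma(t_1)|\le\int_{t_1}^{t_2}|G(\gamma(u))|\,du\le M(t_2-t_1).$$

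Next I would split the parameter interval into a compact piece and a tail. On any compact interval $[0,T]$ the image $\gamma([0,T])$ is a compact subset of $\mathbb{D}$, and $G$ is holomorphic hence continuous in $\mathbb{D}$, so $|G\circ\gamma|$ is bounded on $[0,T]$ automatically. The real content is the tail: by the discussion in Section~2.5, a regular backward orbit that converges non-tangentially must converge to a repelling fixed point $\sigma\in\partial\mathbb{D}$ of $(\phi_t)$, and the convergence is non-tangential at $\sigma$. Now invoke Lemma~2.6: at the repelling fixed point $\sigma$ one has $\angle\lim_{z\to\sigma}G(z)=0$. Combined with non-tangential convergence of $\gamma(u)$ to $\sigma$, this yields $\lim_{u\to+\infty}G(\gamma(u))=0$, so $|G\circ\gamma|$ is bounded on some tail $[T,+\infty)$.

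Putting the two estimates together gives a uniform bound $M:=\sup_{u\ge0}|G(\gamma(u))|<+\infty$, and the displayed inequality above proves that $\gamma$ is $M$-Lipschitz for the Euclidean metric on $\mathbb{D}$. There is no serious obstacle here; the only delicate point is making sure we are genuinely in the setting where Lemma~2.6 applies, which is exactly why the hypotheses \emph{regular} and \emph{non-tangential} are both needed: regularity rules out the super-repelling case, while non-tangentiality both forces the limit point to be a repelling (not the Denjoy-Wolff) fixed point and allows us to pass from the angular limit of $G$ to an honest limit along $\gamma$.
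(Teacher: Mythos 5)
Your proposal is correct and follows essentially the same route as the paper: both arguments reduce Lipschitz continuity to the boundedness of $G\circ\gamma$, obtained by combining Lemma 2.6 (the angular limit of $G$ at the repelling fixed point is $0$) with the fact that non-tangential convergence confines the orbit to a Stolz angle. The only cosmetic difference is that the paper bounds $|\gamma(t_2)-\gamma(t_1)|$ directly by the integral $\int|G(\gamma(s-t))|\,dt$ without first writing out the ODE $\gamma'=-G\circ\gamma$, and asserts a single Stolz angle containing all of $\gamma([0,+\infty))$ rather than splitting into a compact piece and a tail.
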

	\begin{proof}
		Let $\lim\limits_{t\to+\infty}\gamma(t)=\sigma$. By Section 2.5, the fact that $\gamma$ converges non-tangentially to $\sigma$, forces $\sigma$ to be a repelling fixed point of $(\phi_t)$. Let $s\ge t_2\ge t_1\ge0$ and set $s_1=s-t_1$ and $s_2=s-t_2$. Obviously, $s_1\ge s_2$. Then,
		$$|\gamma(t_2)-\gamma(t_1)|=|\gamma(s-s_2)-\gamma(s-s_1)|=|\phi_{s_2}(\gamma(s))-\phi_{s_1}(\gamma(s))|,$$
		where the last equality follows from the definition of a backward orbit. Let $G$ be the infinitesimal generator of $(\phi_t)$. Then,
		\begin{eqnarray*}
		|\gamma(t_2)-\gamma(t_1)|&=&|\phi_{s_1}(\gamma(s))-\phi_{s_2}(\gamma(s))|\\
		&\le&\int\limits_{s_2}^{s_1}\left|\frac{\partial\phi_t(\gamma(s))}{\partial t}\right|dt\\
		&=&\int\limits_{s_2}^{s_1}|G(\phi_t(\gamma(s)))|dt\\
		&=&\int\limits_{s_2}^{s_1}|G(\gamma(s-t))|dt.
		\end{eqnarray*}
		However, $\lim\limits_{t\to+\infty}\gamma(t)=\sigma$ and $\angle\lim\limits_{z\to\sigma}G(z)=0$. As a consequence, $G$ is bounded on every Stolz angle with vertex $\sigma$. Since $\gamma$ converges to $\sigma$ non-tangentially, there must exist a Stolz angle $S$ with vertex $\sigma$ such that $\gamma([0,+\infty))\subset S$. Therefore, there exists an absolute constant $C>0$, depending only on $S$, such that $|G(\gamma(t))|<C$, for every $t\ge0$. Combining with the above relations, we get $|\gamma(t_2)-\gamma(t_1)|\le C(s_1-s_2)=C(t_2-t_1)$. This last relation yields the desired result.
	\end{proof}
	Now, we are ready for the proof of Theorem 1.1.
	
	\textit{Proof of Theorem 1.1} First of all, since $\gamma(0)\in\mathbb{D}$ and $\gamma$ converges to a point of the unit circle, it is trivial that $\lim\limits_{t\to+\infty}k_\mathbb{D}(\gamma(0),\gamma(t))=+\infty$. As a result, $\gamma$ satisfies the elementary necessary condition to be considered as a candidate for a quasi-geodesic. Moreover, as we showed before, if $\gamma$ converged to the Denjoy-Wolff point $\tau$, the convergence would be tangential. Therefore, $\gamma$ converges to a point $\sigma\in\partial\mathbb{D}\setminus\{\tau\}$ and $\sigma$ must be a repelling fixed point of $(\phi_t)$, because $\gamma$ is regular. For the remainder of the proof, we denote by $\mu\in(-\infty,0)$ the repelling spectral value of $\sigma$.
	\par Set $\Omega=h(\mathbb{D})$, where $h$ is the Koenigs function of $(\phi_t)$. According to \cite{str}, $\Omega$ must contain a maximal horizontal strip of amplitude $-\frac{\pi}{\mu}$. We denote by $S$ this maximal strip. Then, we can write $S=\{z\in\mathbb{C}:|\Im z-a^*|<-\frac{\pi}{2\mu}\}$, where $a^*=\lim\limits_{r\to1^-}\Im h(r\sigma)$.
	We know that the set $\Delta=h^{-1}(S)$ is called a petal of $(\phi_t)$ and every regular backward orbit converging to $\sigma$ is contained inside $\Delta$. Therefore, $\gamma([0,+\infty))\subset\Delta$ or equivalently $h\circ\gamma([0,+\infty))\subset S$. In addition, since $\gamma$ is a backward orbit, it is true that
	$$\phi_t(\gamma(t))=\gamma(t-t)=\gamma(0),$$
	for all $t\ge0$. Turning to $\Omega$ through $h$, $h(\phi_t(\gamma(t)))=h(\gamma(t))+t$, which yields
	$$h(\gamma(t))=h(\gamma(0))-t,$$
	for all $t\ge0$. On the other hand, a similar computation shows that the image of the forward trajectory of $\gamma(0)$ through $h$ is the horizontal half-line $\{h(\gamma(0))+t:t\ge0\}$. Thus, the union of the images of the backward and forward trajectories of $\gamma(0)$ through $h$ is a horizontal line which separates $\partial\Omega$ into two disjoint sets. We write $\partial\Omega^+=\{w\in\partial\Omega:\Im w>\Im h(\gamma(0))\}$ and $\partial\Omega^{-}=\{w\in\partial\Omega:\Im w<\Im h(\gamma(0))\}$. It is easy to see that $\partial\Omega^+\cap\partial\Omega^-=\emptyset$ and $\partial\Omega^+\cup\partial\Omega^-=\partial\Omega$. For $z\in\Omega$, we have already set $\delta_\Omega(z)=\text{dist}(z,\partial\Omega)$. For the remainder of the article, we also set $\delta_\Omega^+(z)=\text{dist}(z,\partial\Omega^+)$ and $\delta_\Omega^-(z)=\text{dist}(z,\partial\Omega^-)$.
	\par We find $p^+=x^++iy^+\in\partial\Omega^+$ such that $\delta_\Omega^+(h(\gamma(0)))=|h(\gamma(0))-p^+|$. Likewise, we find $p^-=x^-+iy^-\in\partial\Omega^-$ such that $\delta_\Omega^-(h(\gamma(0)))=|h(\gamma(0))-p^-|$. Setting $x=\min\{x^+,x^-\}$ and by the convexity in the positive direction of $\Omega$, we get that the horizontal half-lines $L^+=\{w:\Re w\le x,\Im w=y^+\}$ and $L^-=\{w:\Re w\le x, \Im w=y^-\}$ are both subsets of $\mathbb{C}\setminus\Omega$. Therefore, $S\subset\Omega\subset B:=\mathbb{C}\setminus(L^+\cup L^-)$ (see Figure 3). Moreover, since $\lim\limits_{t\to+\infty}\Re h(\gamma(t))=-\infty$, there exists a $t_0\ge0$ such that $\Re h(\gamma(t))\le x$, for all $t\ge t_0$.
	\begin{figure}
		\centering
		\includegraphics[scale=0.8]{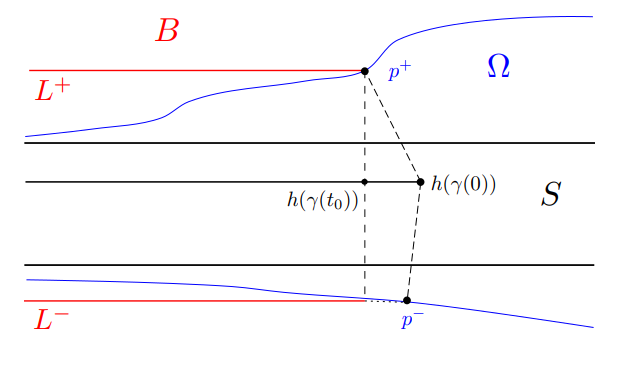}
		\caption{The sets of Theorem 1.1}
	\end{figure}
	\par Given the geometrical properties of $S$ and $B$ and the fact that the set $h(\gamma([0,+\infty)))$ is a horizontal half-line, there exist positive constants $c,d$ such that $\delta_S(h(\gamma(t)))=c$ and $\delta_B(h(\gamma(t)))=d$, for all $t\ge t_0$. Combining all the above and making use of the monotonicity and the conformal invariance of the hyperbolic metric, we have for $t_2\ge t_1\ge t_0$:
	\begin{eqnarray*}
		l_\mathbb{D}(\gamma;[t_1,t_2])=l_\Omega(h\circ\gamma;[t_1,t_2])&\le& l_S(h\circ\gamma;[t_1,t_2]) \\
		&=&\int\limits_{t_1}^{t_2}\lambda_S(h(\gamma(t)))|(h\circ\gamma)'(t)|dt \\
		&\le&\int\limits_{t_1}^{t_2}\frac{|(h\circ\gamma)'(t)|dt}{\delta_S(h(\gamma(t)))}\\
		&=&\frac{4d}{c}\int\limits_{t_1}^{t_2}\frac{|(h\circ\gamma)'(t)|dt}{4\delta_B(h(\gamma(t)))}.
	\end{eqnarray*} 
	\par The horizontal line $\{w:\Im w=\frac{y^++y^-}{2}\}$ is an axis of symmetry for $B$. Therefore, the curve $\eta:[t_0,+\infty)\to B$ with $\eta(t)=\Re h(\gamma(t))+i\frac{y^++y^-}{2}$ is a geodesic arc for the hyperbolic distance $k_B$ of $B$. Setting $\epsilon=\frac{y^+-y^-}{2}$, it is evident that $\delta_B(\eta(t))=\epsilon$, for all $t\ge t_0$. Furthermore, an easy direct computation shows that $|(h\circ\gamma)'(t)|=|\eta'(t)|=1$, for all $t\ge t_0$. Continuing the above relations, we get
	\begin{eqnarray*}
		l_\mathbb{D}(\gamma;[t_1,t_2])&\le&\frac{4\epsilon}{c}\int\limits_{t_1}^{t_2}\frac{|(h\circ\gamma)'(t)|dt}{4\delta_B(\eta(t))}\\
		&=&\frac{4\epsilon}{c}\int\limits_{t_1}^{t_2}\frac{|\eta'(t)|dt}{4\delta_B(\eta(t))}\\
		&\le&\frac{4\epsilon}{c}l_B(\eta;[t_1,t_2])\\
		&=&\frac{4\epsilon}{c}k_B(\eta(t_1),\eta(t_2)),
	\end{eqnarray*}
	for all $t_2\ge t_1\ge t_0$, where the last equality is due to the fact that $\eta$ is a geodesic arc. However, by the triangle inequality, we get 
	$$k_B(\eta(t_1),\eta(t_2))\le k_B(h(\gamma(t_1)),h(\gamma(t_2)))+\sum\limits_{i=1}^{2}k_B(h(\gamma(t_i)),\eta(t_i)).$$
	\par For $t\ge t_0$, we denote by $\Gamma_t$ the vertical line segment joining $\eta(t)$ to $h(\gamma(t))$. We can parametrize it as follows: $\Gamma_t:[0,1]\to B$, with $\Gamma_t(s)=(1-s)\eta(t)+s\cdot h(\gamma(t))$. Then, by Lemma 2.1,
	$$k_B(\eta(t),h(\gamma(t)))\le\int\limits_{0}^{1}\frac{|\Gamma_t'(s)|ds}{\delta_B(\Gamma_t(s))}\le\int\limits_{0}^{1}\frac{|h(\gamma(t))-\eta(t)|ds}{\min\limits_{s\in[0,1]}\delta_B(\Gamma_t(s))}\le\int\limits_{0}^{1}\frac{\epsilon ds}{d}=\frac{\epsilon}{d}.$$
	Combining this last result with the triangle inequality and returning to the previous relations, we find that
	$$l_\mathbb{D}(\gamma;[t_1,t_2])\le\frac{4\epsilon}{c}k_B(h(\gamma(t_1)),h(\gamma(t_2)))+\frac{8\epsilon^2}{cd},$$
	for all $t_2\ge t_1\ge t_0$. But $\Omega\subset B$, so 
	$$k_B(h(\gamma(t_1)),h(\gamma(t_2)))\le k_\Omega(h(\gamma(t_1)),h(\gamma(t_2)))=k_\mathbb{D}(\gamma(t_1),\gamma(t_2)).$$
	Setting $A=\frac{4\epsilon}{c}$ and $B=\frac{8\epsilon^2}{cd}+l_\mathbb{D}(\gamma;[0,t_0])$, we finally get
	$$l_\mathbb{D}(\gamma;[t_1,t_2])\le A\cdot k_\mathbb{D}(\gamma(t_1),\gamma(t_2))+B,$$
	for all $t_2\ge t_1\ge0$, and this yields the desired result. \qed
	
	\begin{rem}
		Perhaps the condition of regularity for Theorem 1.1 is actually not necessary. Indeed, even if $\gamma$ is non-regular, it can still be a quasi-geodesic and even a geodesic. This happens, for instance, when $\Omega$ contains no horizontal strips and is symmetric with respect to a horizontal line and $h(\gamma([0,+\infty)))$ lies on this line. So, a natural conjecture is that Theorem 1.1 is true for non-regular backward orbits that converge non-tangentially as well.
	\end{rem}
	
	\section{Backward Orbits and Boundary Distance}
	In this section, we will give a property of backward orbits in terms of Euclidean geometry. We will first prove Theorem 1.2 and in the end we will state some corollaries.\\
	
	\textit{Proof of Theorem 1.2} We aim for a contradiction. To this goal, suppose that $\limsup\limits_{t\to+\infty}\frac{\delta_\Omega^+(h(\gamma(t)))}{\delta_\Omega^-(h(\gamma(t))}=+\infty$. Then, there exists a strictly increasing sequence $\{t_n\}\subset[0,+\infty)$ with $\lim\limits_{n\to+\infty}t_n=+\infty$ such that $\lim\limits_{n\to+\infty}\frac{\delta_\Omega^+(h(\gamma(t_n)))}{\delta_\Omega^-(h(\gamma(t_n)))}=+\infty$. For the sake of convenience in notation, we set $\delta^+(n)=\delta_\Omega^+(h(\gamma(t_n)))$ and $\delta^-(n)=\delta_\Omega^-(h(\gamma(t_n)))$. Without loss of generality, by extracting a subsequence, we can assume that $\delta^+(n)>\delta^-(n)$, for all $n\in\mathbb{N}$. In addition, we denote by $p_n^+$ a point of $\partial\Omega^+$ such that $\delta^+(n)=|h(\gamma(t_n))-p_n^+|$ and similarly, by $p_n^-$ a point of $\partial\Omega^-$ such that $\delta^-(n)=|h(\gamma(t_n))-p_n^-|$. Moreover, let $\gamma_n^+$ be the line segment $[h(\gamma(t_n)),p_n^+)$ and $\gamma_n^-$ the line segment $[h(\gamma(t_n)),p_n^-)$. Clearly, $\gamma_n^+,\gamma_n^-\subset\Omega$, for all $n\in\mathbb{N}$ (see Figure 4). Finally, we denote by $\Gamma_n^-$ the family of all Jordan arcs that join $h(\gamma(t_n))$ to $\partial\Omega^-$ inside $\Omega$. Obviously, $\gamma_n^-\in\Gamma_n^-$, for all $n\in\mathbb{N}$.
	\begin{figure}
		\centering
		\includegraphics[scale=0.85]{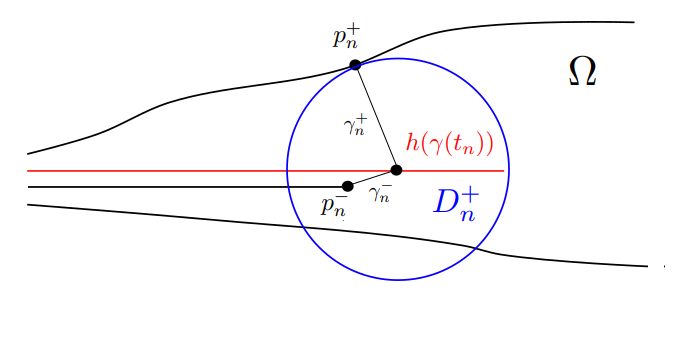}
		\caption{The sets of Theorem 1.2}
	\end{figure}
	\par We need to return, for a bit, to the unit disk $\mathbb{D}$. The line $\{w:\Im w=\Im h(\gamma(0))\}$ separates $\partial\Omega$ into $\partial\Omega^+$ and $\partial\Omega^-$. We know that this line is the image through $h$ of the set $\gamma([0,+\infty))\cup\{\phi_t(\gamma(0)):t\ge0\}$, where $\{\phi_t(\gamma(0)):t\ge0\}$ is the forward trajectory of $\gamma(0)$ under the semigroup $(\phi_t)$. The above union is a maximal invariant curve of $(\phi_t)$ (see \cite[Section 13.3]{book}) and separates the unit circle $\partial\mathbb{D}$ into two circular arcs $\partial\mathbb{D}^+$ and $\partial\mathbb{D}^-$ with endpoints the Denjoy-Wolff point $\tau$ and the point $\sigma$ where $\gamma$ converges. Let's say that $\partial\mathbb{D}^+$ corresponds through $h$ to $\partial\Omega^+$ and $\partial\mathbb{D}^-$ to $\partial\Omega^-$ respectively. Then, by the bijective nature of the Koenigs function $h$, the family of all Jordan arcs that join $\gamma(t_n)$ to $\partial\mathbb{D}^-$ inside $\mathbb{D}$ certainly contains the family $h^{-1}(\Gamma_n^-)$. \par For $n\in\mathbb{N}$, Beurling's estimate in Lemma 2.3 yields
	$$\omega(\gamma(t_n),\partial\mathbb{D}^+,\mathbb{D})\le C\cdot\exp(-\pi\cdot\lambda(\xi,\partial\mathbb{D}^+,\mathbb{D})),$$
	for all Jordan arcs $\xi\in h^{-1}(\Gamma_n^-)$.
	Using the conformal invariance of both the harmonic measure and the extremal distance, we get
	$$\omega(h(\gamma(t_n)),\partial\Omega^+,\Omega)\le C\cdot\exp(-\pi\cdot\lambda(\xi',\partial\Omega^+,\Omega)),$$ 
	for all Jordan arcs $\xi'\in\Gamma_n^-$.
	Since $\gamma_n^-\in\Gamma_n^-$, taking limits as $n\to+\infty$, we have
	\begin{equation}
	\limsup\limits_{n\to+\infty}\omega(h(\gamma(t_n)),\partial\Omega^+,\Omega)\le C\cdot\exp(-\pi\cdot\liminf\limits_{n\to+\infty}\lambda(\gamma_n^-,\partial\Omega^+,\Omega)).
	\end{equation}
	\par We set $D_n^-=D(h(\gamma(t_n)),\delta^-(n))$, where by $D(z,r)$, $z\in\mathbb{C},r>0$, we denote the Euclidean disk of center $z$ and radius $r$. Then, the line segment $\gamma_n^-$ is a radius of $D_n^-$. In a similar manner, we consider the disks $D_n^+=D(h(\gamma(t_n)),\delta^+(n))$, each one having the line segment $\gamma_n^+$ as its radius. Bearing in mind the assumption $\delta^+(n)>\delta^-(n)$, by construction, $D_n^-\subset\Omega$, for all $n\in\mathbb{N}$, while the same is not true for the disks $D_n^+$. Nevertheless, it is evident that $\gamma_n^-\subset D_n^+$, for all $n\in\mathbb{N}$ (see Figure 4).
	\par We denote by $\Omega_n$ the connected component of $\Omega\cap D_n^+$ that contains $\gamma_n^-$ and set $E_n=\partial\Omega_n\setminus\partial\Omega^-$. Every curve that joins $\gamma_n^-$ to $\partial\Omega^+$ inside $\Omega$ contains a subcurve joining $\gamma_n^-$ to $E_n$ inside $\Omega_n$. By Lemma 2.2,
	$$\lambda(\gamma_n^-,\partial\Omega^+,\Omega)\ge\lambda(\gamma_n^-,E_n,\Omega_n),$$
	for every $n\in\mathbb{N}$. Moreover, the family of curves joining $\gamma_n^-$ to $E_n$ inside $\Omega_n$ is a subfamily of the family of curves joining $\gamma_n^-$ to $\partial D_n^+$ inside $D_n^+$. Therefore,
	$$\lambda(\gamma_n^-,E_n,\Omega_n)\ge\lambda(\gamma_n^-,\partial D_n^+,D_n^+),$$
	for every $n\in\mathbb{N}$.
	 Combining, we get
	\begin{equation}
		\lambda(\gamma_n^-,\partial\Omega^+,\Omega)\ge\lambda(\gamma_n^-,\partial D_n^+,D_n^+),
	\end{equation}
	for all $n\in\mathbb{N}$. By the conformal invariance of the extremal distance, we have
	\begin{eqnarray*}
		\lambda(\gamma_n^-,\partial D_n^+, D_n^+)&=&\lambda(\gamma_n^--h(\gamma(t_n)),\partial D_n^+-h(\gamma(t_n)), D(0,\delta^+(n)))\\
		&=&\lambda\left(\frac{\gamma_n^--h(\gamma(t_n))}{\delta^+(n)},\partial\mathbb{D},\mathbb{D}\right).
	\end{eqnarray*}
	The line segment $\gamma_n^--h(\gamma(t_n))$ starts at $0$, while its length is $\delta^-(n)$, so by a suitable rotation we can map the line segment $\frac{\gamma_n^--h(\gamma(t_n))}{\delta^+(n)}$ conformally onto the line segment $\left[0,\frac{\delta^-(n)}{\delta^+(n)}\right).$ Therefore, through this chain of conformal mappings, we get
	$$\lambda(\gamma_n^-,\partial D_n^+, D_n^+)=\lambda\left(\left[0,\frac{\delta^-(n)}{\delta^+(n)}\right],\partial\mathbb{D},\mathbb{D}\right).$$ 
	By our hypothesis, $\lim\limits_{n\to+\infty}\frac{\delta^-(n)}{\delta^+(n)}=0$. Through the theory of Gr\"{o}tzsch domains, this implies that $\lim\limits_{n\to+\infty}\lambda(\gamma_n^-,\partial D_n^+, D_n^+)=+\infty$. Returning to (2), this yields $\lim\limits_{n\to+\infty}\lambda(\gamma_n^-,\partial\Omega^+,\Omega)=+\infty$. Finally, going back to (1) and Beurling's estimate, we find that $\lim\limits_{n\to+\infty}\omega(h(\gamma(t_n)),\partial\Omega^+,\Omega)=0$. By Remark 2.1, this signifies that the sequence $\{\gamma(t_n)\}$ converges to $\sigma$ tangentially, as $n\to+\infty$. Contradiction!
	\par Consequently, $\limsup\limits_{t\to+\infty}\frac{\delta_\Omega^+(h(\gamma(t)))}{\delta_\Omega^-(h(\gamma(t)))}<+\infty$. Following a similar procedure, we can prove that $\liminf\limits_{t\to+\infty}\frac{\delta_\Omega^+(h(\gamma(t)))}{\delta_\Omega^-(h(\gamma(t)))}>0$. The combination of these last two relations certifies that there exists indeed an absolute constant $c\ge1$ such that
	$$\frac{1}{c}\cdot\delta_\Omega^-(h(\gamma(t)))\le\delta_\Omega^+(h(\gamma(t)))\le c\cdot\delta_\Omega^-(h(\gamma(t))),$$
	for all $t\ge0$. Thus, Theorem 1.2 is proved. \qed
	
	\begin{rem}
		As we can see from the proof, the curve $\gamma$ needed to just be a backward orbit. There was no added condition of regularity, contrary to Theorem 1.1. Furthermore, the proof itself shows that if either $\liminf\limits_{t\to+\infty}\frac{\delta_\Omega^+(h(\gamma(t)))}{\delta_\Omega^-(h(\gamma(t)))}=0$ or $\limsup\limits_{t\to+\infty}\frac{\delta_\Omega^+(h(\gamma(t)))}{\delta_\Omega^-(h(\gamma(t)))}=+\infty$ (or both), then $\gamma$ converges to $\sigma$ by a set of angles that contains either $\{0\}$ or $\{\pi\}$ (or both).
	\end{rem}
	The preceding remark leads to a partial converse of Theorem 1.2. Before that, we need to return to boundary distances again. In some cases, the image of a backward orbit $\gamma$ through the Koenigs function $h$ might not separate the boundary of $\Omega=h(\mathbb{D})$ into two components. For example, if $\gamma$ is a regular backward orbit that converges to the Denjoy-Wolff point $\tau$ of a semigroup $(\phi_t)$, then $\Omega$ contains a maximal horizontal half-plane $H$ and $h(\gamma([0,+\infty)))\subset H$. Suppose that the half-plane is of the form $H=\{w\in\mathbb{C}:\Im w>a\}$, for some $a\in\mathbb{R}$ (see Figure 2). Then, maintaining the previous symbolisms, $\partial\Omega^+=\emptyset$, while $\lim\limits_{t\to+\infty}\delta_\Omega^-(h(\gamma(t)))=\Im h(\gamma(0))-a$ due to the maximality of $H$. In such a case, we can write $\delta_\Omega^+(h(\gamma(t)))=+\infty$, for all $t\in[0+\infty)$. With this in mind, we have the following corollary.
	\begin{cor}
		Let $(\phi_t)$ be a non-elliptic semigroup in $\mathbb{D}$ with Koenigs function $h$. Suppose that $\gamma:[0,+\infty)\to\mathbb{D}$ is a backward orbit for $(\phi_t)$ that converges tangentially to a regular fixed point of the semigroup. Then, the limit  $\lim\limits_{t\to+\infty}\frac{\delta_\Omega^+(h(\gamma(t)))}{\delta_\Omega^-(h(\gamma(t)))}$ is equal to 0 or $+\infty$.
	\end{cor}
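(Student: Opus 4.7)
The plan is to split on the type of regular fixed point that $\gamma$ approaches: the Denjoy--Wolff point $\tau$ or a repelling fixed point $\sigma$. In each case, the information gathered in Section 2.5 forces a very rigid geometric picture for the image $h\circ\gamma$ inside $\Omega$, and exploiting the maximality of a strip or half-plane together with the convexity of $\Omega$ in the positive direction will pin the limit down to one of the two stated values.

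Suppose first that $\gamma$ converges tangentially to $\tau$. By the discussion in Section 2.5, $\gamma$ is then either regular (so $h\circ\gamma$ sits strictly inside a maximal horizontal half-plane $H\subset\Omega$) or non-regular of type (ii) (so $h\circ\gamma$ lies on the bounding line of such an $H$). Writing, without loss of generality, $H=\{w:\Im w>c\}$, we always have $\Im h(\gamma(0))\ge c$, hence $\{w:\Im w>\Im h(\gamma(0))\}\subset H\subset\Omega$, and so there are no boundary points of $\Omega$ above the separating line, i.e.\ $\partial\Omega^+=\emptyset$. Under the convention $\delta_\Omega^+\equiv+\infty$ adopted in the paragraph preceding the corollary, the ratio is identically $+\infty$; the symmetric situation $H=\{\Im w<c\}$ yields the value $0$ in the same way.

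Suppose next that $\gamma$ converges tangentially to a repelling fixed point $\sigma$. Since regular backward orbits converging to a repelling point do so non-tangentially, $\gamma$ must be non-regular of type (i), so $h\circ\gamma$ lies on the boundary of some maximal horizontal strip $\{a<\Im w<b\}\subset\Omega$. Taking the strip to lie below $h\circ\gamma$ (the opposite case is symmetric), we get $\Im h(\gamma(0))=b$. Because the whole strip is contained in $\Omega$, every point of $\partial\Omega^-$ satisfies $\Im w\le a$, so $\delta_\Omega^-(h(\gamma(t)))\ge b-a>0$ for every $t\ge 0$. Thus it suffices to prove $\delta_\Omega^+(h(\gamma(t)))\to 0$, and then the ratio converges to $0$.

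The main obstacle is exactly this last claim, which I would attack by contradiction. If it fails, there exist $r\in(0,b-a)$ and a subsequence $t_n\to+\infty$ with $\delta_\Omega^+(h(\gamma(t_n)))\ge r$; combined with the uniform lower bound on $\delta_\Omega^-$, the entire disk $D(h(\gamma(t_n)),r)$ then lies in $\Omega$. For any $r'\in(0,r)$ the point $h(\gamma(t_n))+ir'$ belongs to this disk, and convexity in the positive direction of $\Omega$ places the whole horizontal ray $\{w:\Re w\ge\Re h(\gamma(t_n)),\,\Im w=b+r'\}$ inside $\Omega$. Since $\Re h(\gamma(t_n))\to-\infty$, taking the union over $n$ shows that the full line $\{\Im w=b+r'\}$ lies in $\Omega$; letting $r'$ sweep $(0,r)$ then produces the enlarged strip $\{a<\Im w<b+r\}\subset\Omega$, contradicting the maximality of $\{a<\Im w<b\}$.
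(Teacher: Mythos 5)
Your proof is correct and follows essentially the same route as the paper: the identical three-case decomposition from Section 2.5 (regular orbit to the Denjoy--Wolff point, non-regular to the Denjoy--Wolff point, non-regular to a repelling point), with the position of $h\circ\gamma$ relative to a maximal half-plane or maximal strip deciding the value of the limit. The only difference is that the paper dismisses the verification with ``it is easy to see,'' whereas you actually supply the details --- in particular the maximality-plus-convexity argument showing $\delta_\Omega^+(h(\gamma(t)))\to 0$ in the repelling case --- and those details are sound.
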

	\begin{proof}
		 The type of convergence in the hypothesis can happen in three distinct cases, as we have already seen in Section 2.5: either $\gamma$ is regular and converges to the Denjoy-Wolff point, or $\gamma$ is non-regular and converges to the Denjoy-Wolff point, or finally $\gamma$ is non-regular and converges to a repelling fixed point of $(\phi_t)$. In the first case, $h\circ\gamma([0,+\infty))$ is contained in a maximal horizontal half-plane contained in $\Omega=h(\mathbb{D})$. In the second case, $h\circ\gamma([0,+\infty))$ lies on the boundary of such a half-plane. Finally, in the third case, $h\circ\gamma([0,+\infty))$ lies on the boundary of a maximal horizontal strip contained in $\Omega$. It is easy to see that in any case, the desired result is true.
		
	\end{proof}
	Studying again the proof of Theorem 1.2, we can also make a conclusion about a non-regular backward orbit $\gamma:[0,+\infty)\to\mathbb{D}$ that lands at a super-repelling fixed point $\sigma$. In this case, as we mentioned before, the convergence can be of any kind. More specifically, the cluster set of $\arg(1-\bar{\sigma}\gamma(t))$ can be any compact and connected subset of $[-\frac{\pi}{2},\frac{\pi}{2}]$ (see \cite{kelg}). Keeping in mind that conformal mappings preserve orientations and angles, it is directly deduced that:
	\begin{enumerate}[(i)]
	\item $\liminf\limits_{t\to+\infty}\frac{\delta_\Omega^+(h(\gamma(t)))}{\delta_\Omega^-(h(\gamma(t)))}=0$ if and only if $\limsup\limits_{t\to+\infty}\arg(1-\bar{\sigma}\gamma(t))=\frac{\pi}{2}$ (i.e. there exists a strictly increasing sequence $\{t_n\}\subset[0,+\infty)$ with $\lim\limits_{n\to+\infty}t_n=+\infty$ such that $\{\gamma(t_n)\}$ converges to $\sigma$ by angle $0$),
	\item $\limsup\limits_{t\to+\infty}\frac{\delta_\Omega^+(h(\gamma(t)))}{\delta_\Omega^-(h(\gamma(t)))}=+\infty$ if and only if $\liminf\limits_{t\to+\infty}\arg(1-\bar{\sigma}\gamma(t))=-\frac{\pi}{2}$ (i.e. there exists a strictly increasing sequence $\{t_n\}\subset[0,+\infty)$ with $\lim\limits_{n\to+\infty}t_n=+\infty$ such that $\{\gamma(t_n)\}$ converges to $\sigma$ by angle $\pi$).
	\end{enumerate}
	Combining Theorems 1.1 and 1.2 with Corollary 4.1, we directly get one final corollary.
	\begin{cor}
		Let $(\phi_t)$ be a non-elliptic semigroup in $\mathbb{D}$ with Koenigs function $h$. Suppose that $\gamma:[0,+\infty)\to\mathbb{D}$ is a regular backward orbit for $(\phi_t)$. Then, the following are equivalent:
		\begin{enumerate}
			\item[\textup{(i)}] $\gamma$ converges non-tangentially to a point of the unit circle,
			\item[\textup{(ii)}] $\gamma$ is a quasi-geodesic for the hyperbolic distance $k_\mathbb{D}$ of the unit disk,
			\item[\textup{(iii)}] there exists a constant $c\ge1$ such that
			$$\frac{1}{c}\cdot\delta_\Omega^-(h(\gamma(t)))\le\delta_\Omega^+(h(\gamma(t)))\le c\cdot\delta_\Omega^-(h(\gamma(t))),$$
			for all $t\ge0$.
		\end{enumerate}
	\end{cor}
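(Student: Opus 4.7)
The plan is to show the corollary as a near-immediate consequence of the preceding theorems plus the dichotomy for regular backward orbits recorded in Section 2.5. Recall from there that a regular backward orbit $\gamma$ either converges non-tangentially to a repelling fixed point $\sigma\neq\tau$, or converges tangentially to the Denjoy-Wolff point $\tau$. In either case $\gamma$ lands at a \emph{regular} boundary fixed point, which is the key observation that unlocks Corollary 4.1 in the converses below.

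The forward implications are direct quotations of the main theorems. First, (i)$\Rightarrow$(ii) is exactly the content of Theorem 1.1, applied to the regular backward orbit $\gamma$ assumed to converge non-tangentially. Second, (i)$\Rightarrow$(iii) is exactly Theorem 1.2, which as noted in Remark 4.1 does not require the regularity hypothesis at all; here we invoke it with the non-tangential convergence granted by (i).

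For (ii)$\Rightarrow$(i), I would argue by contraposition using the dichotomy. Since $\gamma$ is regular, failure of (i) forces $\gamma$ to converge tangentially to $\tau$. But the excerpt explicitly records (at the end of Section 2.5) that any backward orbit converging tangentially to a point of the unit circle cannot be a quasi-geodesic for $k_\mathbb{D}$. This contradicts (ii), so (i) must hold. A similar contrapositive handles (iii)$\Rightarrow$(i): if (i) fails, then $\gamma$ converges tangentially to the Denjoy-Wolff point $\tau$, a regular fixed point, and Corollary 4.1 applies to give
\[
\lim_{t\to+\infty}\frac{\delta_\Omega^+(h(\gamma(t)))}{\delta_\Omega^-(h(\gamma(t)))}\in\{0,+\infty\},
\]
which directly contradicts the two-sided comparison in (iii).

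I do not expect any real obstacle beyond bookkeeping: the nontrivial inputs (the tangential/non-tangential trichotomy for regular orbits, the fact that tangential convergence rules out the quasi-geodesic property, and Corollary 4.1) are all already in place earlier in the paper. The only small subtlety is ensuring that in (iii)$\Rightarrow$(i) and (ii)$\Rightarrow$(i) we really do exclude every alternative to non-tangential convergence, but this is exactly why the regularity assumption is in the hypothesis: it restricts the convergence picture to the clean dichotomy above, so no super-repelling case needs to be separately addressed.
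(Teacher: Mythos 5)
Your proposal is correct and matches the paper's intent exactly: the paper offers no written proof beyond ``Combining Theorems 1.1 and 1.2 with Corollary 4.1, we directly get one final corollary,'' and your write-up is precisely the filling-in of that combination, using the regular-orbit dichotomy from Section 2.5 and the stated fact that tangentially converging backward orbits are never quasi-geodesics. No gaps.
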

	Analogous results with the above hold for forward trajectories as well (see \cite[Corollary 17.3.2, Theorem 17.3.3]{book}).
	\begin{rem}
		Observing all the proofs of all the aforementioned theorems and corollaries, it is easily understood that any difficulty is mostly due to super-repelling fixed points. For example, Corollary 4.1 (or Corollary 4.2) might seize to hold if we let $\gamma$ converge tangentially (or non-tangentially respectively) to a super-repelling fixed point. A natural conjecture is that both corollaries can be generalized for such backward orbits as well.
	\end{rem}

\section*{Acknowledgments}
\par I thank Professor D. Betsakos for the helpful conversations during the preparation of this work.
\par I also thank Professor M. D. Contreras for the helpful correspondence concerning quasi-geodesics.

\end{document}